\def\tsc#1{\csdef{#1}{\textsc{\lowercase{#1}}\xspace}}
\begin{document}
\let\WriteBookmarks\relax
\def\floatpagepagefraction{1}
\def\textpagefraction{.001}

\shorttitle{Chemical relaxation oscillator designed to control molecular computation}    

\shortauthors{X. Shi and C. Gao}  

\title [mode = title]{Design of universal chemical relaxation oscillator to control molecular computation}  

\tnotemark[1] 

\tnotetext[1]{This work was funded by the National Nature Science Foundation of China under Grant No. 12071428 and 62111530247, and the Zhejiang Provincial Natural Science Foundation of China under Grant No. LZ20A010002.} 

%

\author[1]{Xiaopeng Shi}[style=Chinese]


\ead{12035033@zju.edu.cn}



\affiliation[1]{organization={School of Mathematical Science},
            addressline={Zhejiang University}, 
            city={Hangzhou},
            postcode={310000}, 
            state={},
            country={PR China}}

\author[1]{Chuanhou Gao}[style=Chinese]

\cormark[2]

\ead{gaochou@zju.edu.cn}


\credit{}


\cortext[2]{Corresponding author}



\begin{abstract}
Embedding efficient command operation into biochemical system has always been a research focus in synthetic biology. One of the key problems is how to sequence the chemical reactions that act as units of computation.
The answer is to design chemical oscillator, a component that acts as a clock signal to turn corresponding reaction on or off.
Some previous work mentioned the use of chemical oscillations. However, the models used either lack a systematic analysis of the mechanism and properties of oscillation, or are too complex to be tackled with in practice. 
Our work summarizes the universal process for designing chemical oscillators, including generating robust oscillatory species, constructing clock signals from these species, and setting up termination component to eventually end the loop of whole reaction modules. 
We analyze the dynamic properties of the proposed oscillator model in the context of ordinary differential equations, and discuss how to determine parameters for the effect we want in detail.
Our model corresponds to abstract chemical reactions based on mass-action kinetics which are expected to be implemented into chemistry with the help of DNA strand displacement cascades. Our consideration of ordering chemical reaction modules helps advance the embedding of more complex calculations into biochemical environments.
\end{abstract}


\begin{highlights}
\item We give an approach of designing universal chemical oscillator for synchronous sequential computation in biochemical systems.
\item We analyse the properties of the oscillator model in detail, such as period and amplitude, as well as the dynamic behaviour under different parameter values.
\item We test the adjustment performance of our chemical oscillator based on the counting model.
\item We give a method for the system to terminate the loop spontaneously.
\end{highlights}

\begin{keywords}
 chemical oscillator\sep relaxation oscillation\sep synchronous sequential computation\sep chemical reaction network
\end{keywords}

\maketitle

\section{Introduction}
\label{sec:introduction}
A main desire of synthetic biology is designing programmable chemical controller which can operate in molecular contexts incompatible with traditional electronics \cite{vasic2020}. We have learned plenty of algorithms from how life works such as artificial neural network and genetic algorithm, while on the contrary, inserting advanced computational methods into living organisms to accomplish specific tasks e.g. biochemical sensing and drug delivery is also fascinating. A great deal of related work has sprung up in recent years: Moorman et al. proposed a biomolecular perceptron network in order to recognize patterns or classify cells \emph{in-vivo} \cite{moorman2019dynamical}. The beautiful work of Vasic et al. put the feed-forward RRelu neural network into chemical reaction networks(CRNs), and performed their model on standard machine learning training sets \cite{vasic2021programming}. There were also some attempts to build CRNs capable of learning \cite{chiang2015reconfigurable,blount2017feedforward}. However, no one has implemented the whole neural network computation(including feed-forward and parameter-learning process) into biochemical system. The main reason is that algorithm based on computer instruction performs operations in a sequential manner whereas biochemical reactions proceed synchronously. This contradiction calls for an appropriate mediation method, which isolates two reaction modules \cite{vasic2020} from co-occurring and controls their order. D Blount et al. constructed cell-like compartments and added clock signals artificially in order to solve this problem, which increased the difficulty of biochemical implementation \cite{blount2017feedforward}. A more natural idea is to design chemical oscillators which produce periodical clock signals automatically, taking advantage of their phase change to turn corresponding reaction module on or off.
\par Oscillation phenomena are often encountered in chemical and biological systems such as Belousov-Zhabotinskii reaction \cite{tyson2013} and circadian rhythm \cite{forger2017biological}, while dealing with reaction orders by chemical oscillators is not a groundless rumour. Arredondo and Lakin \cite{arredondo2022supervised} utilized a 20-dimensional oscillator extended from ant colony model \cite{lachmann1995computationally} to order the parts of their chemical neural networks. Jiang et al. introduced a different oscillator model with 12 species and 24 reactions \cite{jiang2011}, then chose two of these species to serve as clock signals. Their work follows the same logic: Firstly, find a suitable oscillator model and give a set of appropriate parameters(along with initial values), then confirm that the model is indeed available for use by simulation. There are two main problems with such design, one is the lack of theory about oscillation mechanism i.e. it is often unclear why these models produce oscillatory behaviour. The other one is that these oscillators belong to harmonic type, whose amplitude and period may not recover the initial values after a perturbation. In order to obtain a satisfactory oscillator structure, these models are highly required to select accurate initial values, which causes difficulties during biochemical implementation. In view of this, we consider to design a set of oscillator models based on transparent mechanism, making sure why the oscillation behavior occurs and how it evolves are clear, and selection of initial values is robust. We also give the relationship between period and parameters in our oscillator model. 
\par Our ultimate goal is to perform our chemical oscillator \textit{in-vivo} together with other operational modules based on chemical reactions, so concentrations of chemical species play the role in our oscillator model. The whole process consists of three steps: construct oscillator model in the context of dynamical system first, then select the appropriate kinetics(mainly mass action kinetics) to put it back into abstract chemical reaction networks \cite{feinberg2019foundations}, and finally utilize DNA strand displacement cascades \cite{soloveichik2010dna} to implement them into chemistry. Since each step of the above transformation process has a relatively mature theory as a guarantee, it is reasonable to carry out our work on the theoretical level of ODE and dynamical system.
\par We focus on designing chemical oscillators for the sequence of two chemical reaction modules and making sure that our method is still valid when faced with the task of ordering multiple reaction modules. Both controlling the sequence and alternating cycles of two reaction modules are very common in molecular operations and synthetic biology, such as module instructions that involve judgment before execution, or reaction modules that realize the loop of feed-forward transmission and back propagation in artificial neural networks. Not only do we provide a common approach of designing transparent oscillator for such requirements, but we also offer a method for how to let the modules terminate the loop according to the judgment statement spontaneously.

\par This paper is organized as follows. Related definitions are given in section \uppercase\expandafter{\romannumeral2}. Section \uppercase\expandafter{\romannumeral3} exhibits the structure of 4-dimensional universal oscillator model based on 2-dimensional relaxation oscillation, which is able to generate a pair of desired oscillatory component with proper selection of parameters. In section \uppercase\expandafter{\romannumeral4} the dynamical behaviours involved in this model are analyzed in detail, and the amplitude and period of oscillatory components are estimated with appropriate parameter values.
 Then we talk about ways to make the system spontaneously terminate the loop in section \uppercase\expandafter{\romannumeral5}. We summarize the general process of placing our oscillator components into reaction modules to be ordered with example of chemical neural network in section \uppercase\expandafter{\romannumeral6}. And finally, section \uppercase\expandafter{\romannumeral7} is dedicated to conclusion and discussion of the whole paper.

\section{Related definitions}
In this section we provide the preparatory knowledge such as definition and concept of chemical reaction network first, following the work of Feinberg \cite{feinberg2019foundations} and Anderson et al \cite{anderson2021reaction}. Then based on our example of our reaction modules, we talk about the design requirements for chemical oscillators.
\subsection{fundamental concept of chemical reaction network(CRN)}
\newtheorem{definition}{Definition}[section]
\begin{definition}
    A \textit{chemical reaction network(CRN for short)} consists of nonempty and finite set of species $ 
\mathcal{S}$ and finite set of complexes $\mathcal{C}$ and set of reactions $\mathcal{R}$ satisfying the following description:
\begin{itemize}
    \item[*] Elements of species $\mathcal{S}$ act as fundamental components in CRN.
    \item[*] Every complex in $\mathcal{C}$ is a linear combination of species over the non-negative integers.
    \item[*] Two complexes connected by arrow form a reaction belonging to $\mathcal{R}$.
    \item[*] Species to the left of the arrow in each reaction are called reactants for that reaction, and the species to the right are called products.
\end{itemize}
\end{definition}

\par We often denote the species set as $\mathcal{S}=\left\{X_{1},...,X_{n}\right\}$, in which case the complexes are of the form $a_{1}X_{1}+\cdots +a_{n}X_{n}$, where $a_{i}\in \mathbb{Z}_{\geqslant 0}$ for each $i \in \left\{1,...,n\right \}$. Then the reaction set $\mathcal{R}=\left \{R_{1},...R_{m} \right \}$ and $R_{i}$ is just like
\begin{equation*}
a_{i1}X_{1}+\cdots +a_{in}X_{n} \overset{k_{i}}{\rightarrow} b_{i1}X_{1}+\cdots +b_{in}X_{n}\ ,
\end{equation*}
for $k_{i}$ is the rate constant of this reaction.
Based on different kinetic assumptions, we can model ordinary differential equations(ODEs for short) for species concentration changes according to a given chemical reaction network. This paper chooses the most common form of kinetics termed \textit{mass-action kinetics}:
\begin{equation*}
    \dot{x} = \Gamma \cdot v\left(x\right).
\end{equation*}
In which $x \in \mathbb{R}^{n}_{\geqslant 0}$ represents the concentration of species $X_{1},...,X_{n}$, coefficient matrix $\Gamma_{n\times m}$ satisfies $\Gamma_{ij} = b_{ij}-a_{ij}$ and rate function $v\left (x \right ) = \left ( k_{1}\prod_{i=1}^{n}x_{i}^{a_{i1}},...,k_{m}\prod_{i=1}^{n}x_{i}^{a_{im}}  \right )^{\top}$.
~\\
\newtheorem{example}{Example}[section]
\begin{example}
Consider the following reaction system:
\begin{align*}
    2X_{1} &\overset{k_{1}}{\rightarrow} X_{2} + X_{3}\ ,\\
    X_{3} &\overset{k_{2}}{\rightarrow} 2X_{1}\ ,
\end{align*}
with the species set  $\mathcal{S}=\left \{ X_{1}, X_{2}, X_{3} \right \}$, the complex set $\mathcal{C}=\left \{2X_{1}, X_{2}+X_{3},X_{3} \right \}$, the stoichiometric matrix $\Gamma_{n\times m} = \begin{pmatrix}
-2&2\\ 
 1&0 \\ 
 1&-1 
\end{pmatrix}$ and the vector-valued rate function $v\left (x \right ) = \left ( k_{1}x_{1}^{2}, k_{2}x_{3} \right )^{\top}$. The ODEs are 
\begin{align*}
    \frac{\mathrm{d} x_{1}}{\mathrm{d} t} &= -2k_{1}x_{1}^{2} + 2k_{2}x_{3}\ , \\
\frac{\mathrm{d} x_{2}}{\mathrm{d} t} &= k_{1}x_{1}^{2}\ ,  \\
\frac{\mathrm{d} x_{3}}{\mathrm{d} t} &= k_{1}x_{1}^{2} - k_{2}x_{3}\ .
\end{align*}
\end{example}

Based on expression of ODEs, we give the definition of catalyst, which would be the form of the oscillatory component that we construct as clock signal participating in the reaction modules.
~\\
\begin{definition}
    We call a species $X_{i}$ catalyst of a specific CRN for that $\frac{\mathrm{d} x_{i}}{\mathrm{d} t} = 0$.
\end{definition}

\subsection{Reaction Modules in Molecular Calculation}
 The concept of reaction modules comes from utilizing chemical reactions to perform operations \cite{vasic2020,buisman2009computing}. Consider the concentration of certain species at one time as the system input and concentration of certain species at another time as the output(input species and output species are usually different), chemical reaction networks can serve as a framework for calculation, and it has been proven that deterministic (mass-action) chemical kinetics is Turing universal \cite{fages2017strong}.
\par There are, as far as we know, two main ways of constructing chemical reactions to achieve a specific operation: One is choosing non-competitive(NC) CRNs whose equilibria are absolutely robust to reaction rates and kinetic rate law \cite{vasic2021programming}, so reaction networks can always achieve specific results regardless of the influence of parameters and initial values. The other one is regarding the operation to be implemented as the expression of ODEs at the equilibrium point \cite{vasic2020}, what operation the reaction network implements depends on kinetic assumption, parameters and initial values of ODEs and even speed of convergence.  Although the former has good robustness, it can realize a very narrow range of operations \cite{chalk2019composable}. While the latter can perform general operations, requiring elaborate design. This paper precisely addresses the problem of coupling reaction modules designed by the latter.
\par We first give an example about reaction modules:
\begin{example}~\\
reaction module 1:
    \begin{align*}
        X_{1} &\to X_{1} + X_{2}\ ,  \\
        X_{3} &\to X_{3} + X_{2}\ , \\
        X_{2} &\to \varnothing \ .  
    \end{align*}
reaction module 2:
    \begin{align*}
         X_{2} &\to X_{1} + X_{2}\ ,  \\
         X_{1} &\to \varnothing\ . 
    \end{align*}
\end{example}

When the reaction rate constant is exactly 1, we omit it by default, and $X_{i} \to \varnothing$ refers to outflow reaction. Considering that the implementation of complex operations requires multiple reaction modules to be coupled, we command that the concentrations of species as input remain constant under the module operation. So in reaction module 1, both $X_{1}$ and $X_{3}$ are input species and species $X_{2}$ is output; while in reaction module 2, the input is $X_{2}$ and output is $X_{1}$. 
Based on mass-action kinetics, we achieve the ODEs for the two reaction modules.\\
reaction module 1:
\begin{equation}
\begin{aligned}
\frac{\mathrm{d} x_{1}}{\mathrm{d} t} &= 0\ ,  \\
\frac{\mathrm{d} x_{2}}{\mathrm{d} t} &= x_{1} + x_{3} - x_{2}\ ,  \\
\frac{\mathrm{d} x_{3}}{\mathrm{d} t} &= 0\ . 
\end{aligned}
\end{equation}
reaction module 2:
\begin{equation}
\begin{aligned}
\frac{\mathrm{d} x_{1}}{\mathrm{d} t} &= x_{2} - x_{1}\ ,  \\
\frac{\mathrm{d} x_{2}}{\mathrm{d} t} &= 0\ .  
\end{aligned}
\end{equation}
These modules correspond exactly to the description of Add module and Id module by Vasic et al. \cite{vasic2020}. We let concentration of species $X_{3}$ play the role of constant value 1(make its initial value equal 1), then the alternation of these two modules realizes the operation instruction like $x_{1}+=1$. However, if we just put these two reaction modules together, the concentrations of $X_{1}$ and $X_{2}$ would continue to increase to infinity without interruption, which fails to reach our aim. This is the stage for chemical oscillators.

\subsection{Design requirements for chemical oscillators}
To be specific in our Example 2.2, we actually need to interrupt and stagger the progression of reactions in the two modules without changing the computational content of the respective module. So we construct two clock signals that are served by specific species $U$ and $V$, and add them to the two separate modules as catalyst. Then we get modified reaction modules:
\begin{example}
~\\
modified reaction module 1:
    \begin{align*}
        X_{1} + V &\to X_{1} + X_{2} + V\ , \\
        X_{3} + V&\to X_{3} + X_{2} + V\ , \\
        X_{2} + V&\to V\ . 
    \end{align*}
modified reaction module 2:
    \begin{align*}
         X_{2} + U&\to X_{1} + X_{2} + U\ , \\
         X_{1} + U&\to U\ . 
    \end{align*}
\end{example}    
Putting them together, the ODEs change:
\begin{equation}
\begin{aligned}
\frac{\mathrm{d} x_{1}}{\mathrm{d} t} &= (x_{2} - x_{1})u\ ,  \\
\frac{\mathrm{d} x_{2}}{\mathrm{d} t} &= (x_{1} + x_{3} -x_{2})v\ ,  \\
\frac{\mathrm{d} x_{3}}{\mathrm{d} t} &= 0\ . 
\end{aligned}
\end{equation}

\begin{figure}
  \centering
  \includegraphics[width=\columnwidth]{./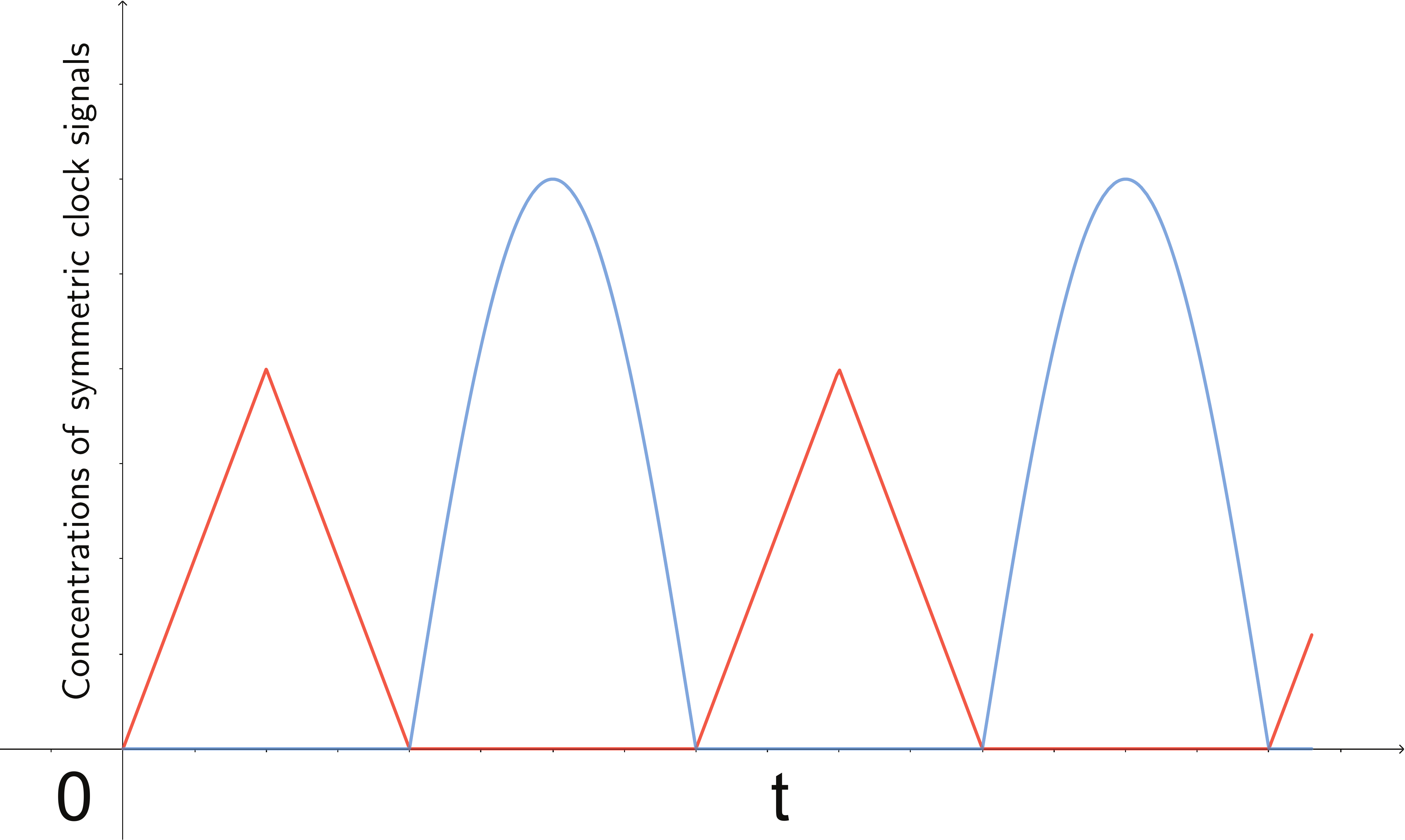}
  \hspace{2in} 
  \includegraphics[width=\columnwidth]{./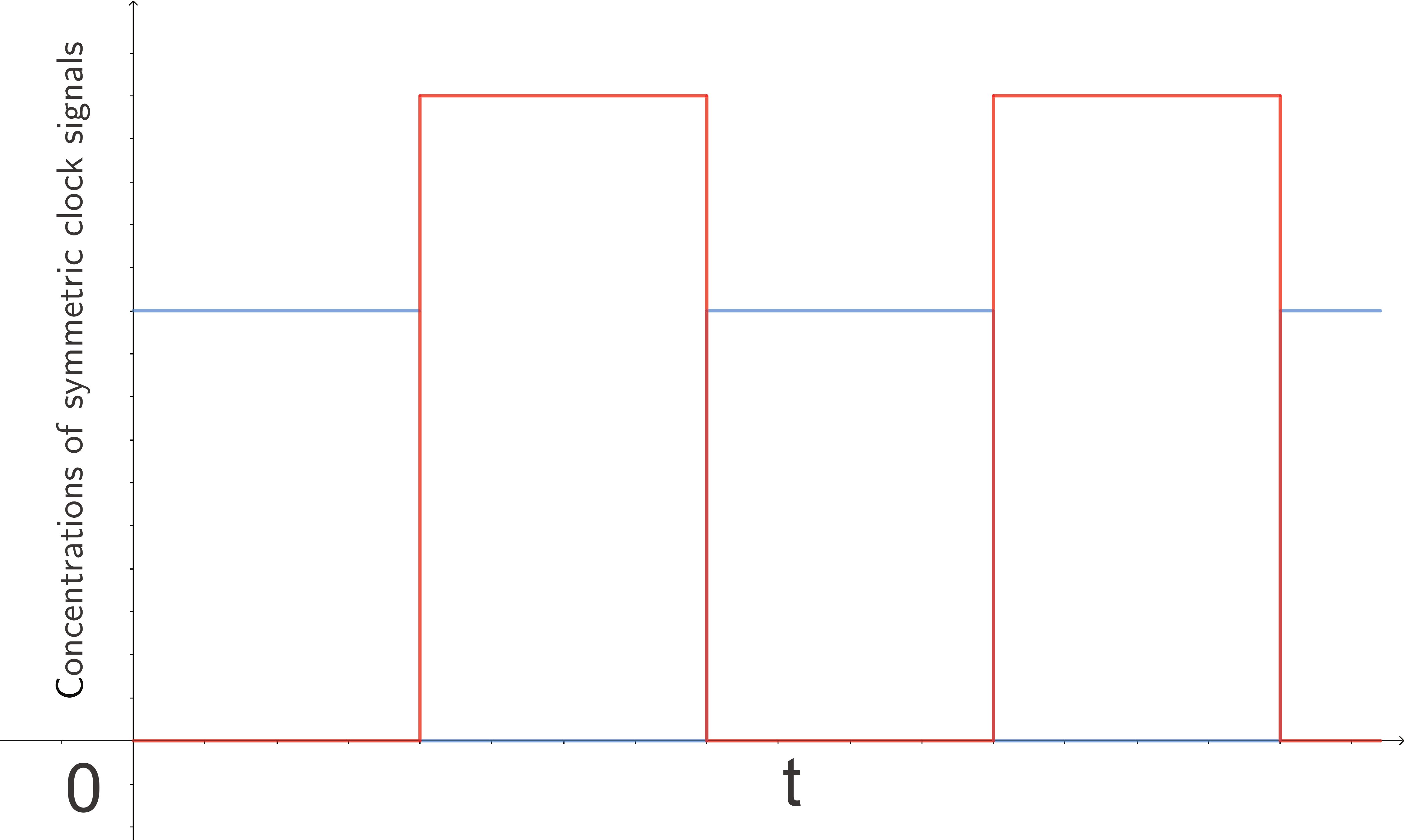}
  \caption{Two diagrams for symmetric clock signals.}
\end{figure}

We naturally use $u$ and $v$ to refer to the concentrations of $U$ and $V$. In order to shut down specific reaction module, concentration of the clock signal needs to stay near zero for some time, and then goes beyond zero to open the module again. Note that our ultimate goal is to have the two reaction modules alternating in cycles, and we treat this example as "Counter Model" which will be used for the spontaneous termination of loop.
Then the oscillatory structures of clock signal $U$ and $V$ should exhibit a certain degree of symmetry. We conclude these as following definitions.
\begin{definition}
	A species is called clock signal for that its concentration oscillates over time, which reaches zero or close enough during some part in a oscillation period and immediately goes beyond zero during the rest part. 
\end{definition}
\begin{definition}
	A pair of clock signals U and V are called symmetric for that when U oscillates to zero or close enough, V goes strictly beyond zero, and vice versa.
\end{definition}
We give two diagrams for symmetric clock signals in Fig.1.

Although the clock signals in both diagrams meet our expectations, We find that the former is more difficult to construct accurately than the latter. Concentrations of the two components will always cross at non-zero, leading to inevitable error. Our chemical oscillator can generate the latter form of clock signals, although they do not remain level at high amplitude, which does not affect the regulation of the reaction modules.

\section{Universal Chemical Oscillator based on Relaxation Oscillation}
This section we introduce the oscillation mechanism we choose and demonstrate its advantages for generating oscillator models. Note that advantages of our oscillator model are the transparency of mechanism and the independence of initial value selection, rather than independence of parameter selection.

\subsection{2-dimension Relaxation Oscillation}
\par Although there are various mechanisms for generating oscillations \cite{gonze2021goodwin,banaji2018inheritance,conradi2019,epstein1998introduction}, few of them can reach our requirements. We give up harmonic oscillators whose oscillatory structure is highly sensitive to the selection of initial values. Most of the limit cycle oscillations we know are difficult to produce oscillatory components that satisfy the definition of clock signals. So we choose relaxation oscillation as the basic mechanism for designing universal chemical oscillators.
\par Relaxation oscillation is a common type of oscillation in biochemical systems \cite{krupa2013network},which can be found from the Oregonator model \cite{field1974oscillations} and the Fitzhugh-Nagumo model \cite{fernandez2020symmetric} in different contexts and its dynamic behaviour has been studied in detail \cite{fernandez2020symmetric,krupa2001relaxation,grasman2012asymptotic}. We first give 2-dimensional structure of general relaxation oscillation model:
\begin{equation}\begin{aligned}
\epsilon \dot{x} &= f(x,y)\ ,  \\
\dot{y} &= g(x,y)\ ,  \\
x \in \mathbb{R},\ &y \in \mathbb{R},\ 0 < \epsilon \ll 1\ .
\end{aligned}\end{equation}
We give following hypothesis for the existence of relaxation oscillation adopted from \cite{krupa2001relaxation}.
\newtheorem{hypothesis}{Hypothesis}[section]
\begin{hypothesis}
	\begin{enumerate}
		\item The critical manifold is defined by $S\overset{\underset{\mathrm{def}}{}}{=} \left\{ \left ( x,y \right ) : f(x,y) = 0\right\}$ and it is S-shaped: Manifold S can be written in the form $y = \varphi (x)$ and the smooth function $\varphi$ has precisely two extreme points, one non-degenerate minimum $x = x_{m}$ and one non-degenerate maximum$x= x_{M}$.
		The two points divide critical manifold $S$ into three parts: $S_{l}$, $S_{m}$ and $S_{r}$:
		\begin{align*}
		S_{l} &= \left\{ (x,\varphi (x)): x < x_{m}\right\}\ ,  \\
		S_{m} &= \left\{ (x,\varphi (x)): x_{m} < x < x_{M}\right\}\ ,  \\
		S_{r} &= \left\{ (x,\varphi (x)): x > x_{M}\right\}\ . 
		\end{align*}
		\item $S_{l}$ and $S_{r}$ are attracting, i.e. $\frac{\partial f}{\partial x} < 0$ on $S_{l}$ and $S_{r}$, while $S_{m}$ is repelling, i.e. $\frac{\partial f}{\partial x} > 0$ on $S_{m}$.
		\item Both extreme points satisfy conditions:
		\par $\frac{\partial^2 f}{\partial x^2}(x_{0},y_{0}) \neq 0$,  $\frac{\partial f}{\partial y}(x_{0},y_{0}) \neq 0$, $ g(x_{0},y_{0}) \neq 0$.
		\item The slow flow on $S_{l}$ satisfies $\dot{x} > 0$ and the slow flow on $S_{r}$ satisfies $\dot{x} < 0$.
	\end{enumerate}
\end{hypothesis}
Hypothesis 3.1 actually describes the phase plane portrait of system (4), which can be viewed as Fig.2. The coordinates of points are $A(x_{A},y_{M})$, $B(x_{m},y_{m})$, $C(x_{C},y_{m})$, $D(x_{M},y_{M})$. Then we define a singular trajectory $\Gamma$ as $\Gamma = \left\{ \left ( x,y \right ) \in S_{l}:x_{A} < x < x_{m} \right\} \cup \left\{ \left ( x,y_{m} \right ):x_{m} < x < x_{C}\right\} \cup \left\{ \left ( x,y \right ) \in S_{r}: x_{M} < x < x_{C}\right\} \cup \left\{ \left ( x,y_{M} \right ): x_{A} < x < x_{M}\right\}$. 
\begin{figure}[!t]
	\centerline{\includegraphics[width=\columnwidth]{./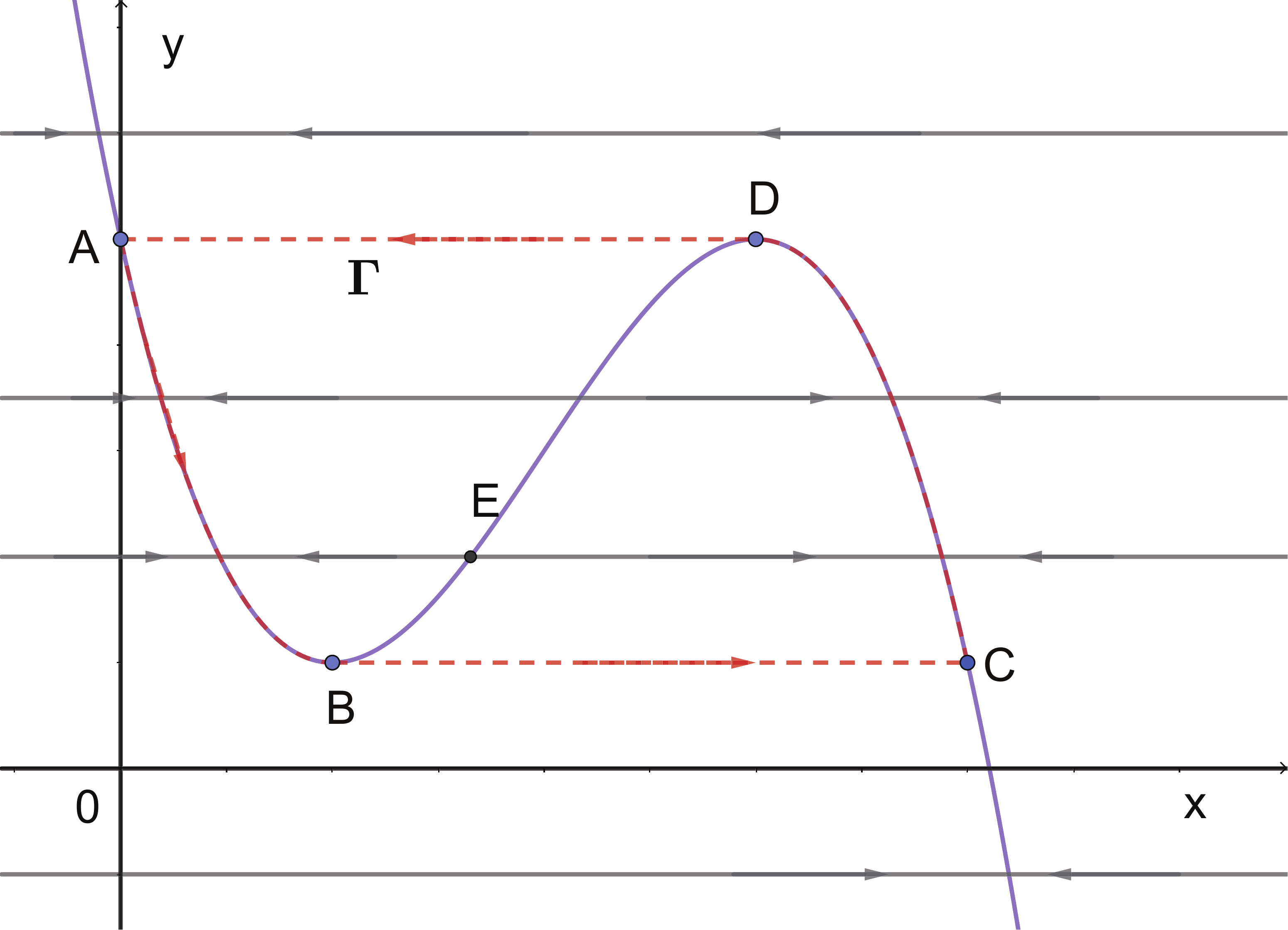}}
	\caption{Phase plane portrait of system (4) based on Hypothesis 3.1.}
	\label{fig2}
\end{figure}
\newtheorem{lemma}{Lemma}[section]
\begin{lemma}
	Assume Hypothesis 3.1. Then for sufficiently small $\epsilon$, there exists a unique limit cycle $\Gamma_{\epsilon}$ lying in a small tubular neighborhood of $\Gamma$. The cycle $\Gamma_{\epsilon}$ is strongly attracting and as $\epsilon \to 0$, the cycle $\Gamma_{\epsilon}$ approaches $\Gamma$ in the Hausdorff distance.
\end{lemma}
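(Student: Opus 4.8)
The plan is to treat system (4) as a fast--slow system with fast variable $x$ and slow variable $y$, and to build the limit cycle $\Gamma_{\epsilon}$ out of four pieces glued together by a Poincar\'e return map, following the geometric singular perturbation theory (GSPT) of Fenichel combined with the fold analysis of Krupa--Szmolyan. Away from the two fold points $B=(x_{m},y_{m})$ and $D=(x_{M},y_{M})$, Hypothesis 3.1(2) tells us that $S_{l}$ and $S_{r}$ are normally hyperbolic and attracting, since $\partial f/\partial x<0$ there. First I would invoke Fenichel's theorem to obtain, for every compact subset of $S_{l}\setminus\{B\}$ and of $S_{r}\setminus\{D\}$ and all sufficiently small $\epsilon$, locally invariant slow manifolds $S_{l,\epsilon}$ and $S_{r,\epsilon}$ that are $O(\epsilon)$-close to the corresponding branches and inherit their strong normal attraction at rate $O(1/\epsilon)$. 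On these manifolds the motion is an $O(\epsilon)$ perturbation of the reduced slow flow, which by Hypothesis 3.1(4) is monotone and directed toward the folds: on $S_{l,\epsilon}$ the $x$-coordinate increases toward $x_{m}$, while on $S_{r,\epsilon}$ it decreases toward $x_{M}$.

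Second, I would analyze the passage near the fold points, which is where Fenichel theory breaks down because normal hyperbolicity is lost ($\partial f/\partial x=0$ at $B$ and $D$). Here Hypothesis 3.1(3) supplies exactly the genericity needed: $\partial^{2}f/\partial x^{2}\neq 0$ forces a nondegenerate (quadratic) fold, while $\partial f/\partial y\neq 0$ and $g\neq 0$ guarantee that the slow flow crosses the fold transversally and does not stall. Using the blow-up method (or, equivalently, the classical Mishchenko--Rozov matched-asymptotic analysis with an Airy-type inner problem), I would show that a trajectory arriving along $S_{l,\epsilon}$ is funneled through a small neighborhood of $B$ and ejected in the fast $x$-direction, landing $O(\epsilon^{2/3})$-close to the horizontal fast fiber $\{y=y_{m}\}$; it then follows this fiber to a neighborhood of $C=(x_{C},y_{m})$ and is recaptured by $S_{r,\epsilon}$. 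A symmetric statement holds at $D$, producing the jump along $\{y=y_{M}\}$ into a neighborhood of $A=(x_{A},y_{M})$. Concatenating the two slow passages with the two fold-and-jump passages reproduces, in the singular limit, precisely the four arcs that define $\Gamma$.

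Third, I would fix a cross-section $\Sigma$ transverse to $\Gamma$, say a short segment through $A$ on $S_{l}$, and define the first-return map $P_{\epsilon}:\Sigma\to\Sigma$ as the composition of the four passages above. The crucial estimate is that $P_{\epsilon}$ is a strong contraction: the exponentially strong normal attraction of $S_{l,\epsilon}$ and $S_{r,\epsilon}$ contracts transverse distances by a factor exponentially small in $\epsilon$, while the blow-up analysis shows the fold passages contribute only an algebraic-in-$\epsilon$ factor and hence cannot overcome this contraction. By the contraction mapping principle $P_{\epsilon}$ then has a unique fixed point for every small $\epsilon$; its orbit is the unique limit cycle $\Gamma_{\epsilon}$, and the exponential contraction of $P_{\epsilon}$ is precisely the statement that $\Gamma_{\epsilon}$ is strongly attracting. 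Finally, since every constituent piece converges to its singular counterpart as $\epsilon\to 0$ (the slow manifolds stay $O(\epsilon)$-close to the branches, the fold neighborhoods shrink to the points $B$ and $D$, and the jumps converge to the horizontal fibers), we conclude $\Gamma_{\epsilon}\to\Gamma$ in Hausdorff distance.

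I expect the fold passage of the second step to be the main obstacle: it is the only place where the hyperbolic GSPT machinery fails, and controlling both the geometry of the jump and its contribution to the contraction of $P_{\epsilon}$ requires the full blow-up (or matched-asymptotics) analysis rather than a soft invariant-manifold argument. This is also exactly where Hypothesis 3.1(3) is indispensable.
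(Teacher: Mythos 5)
Your proposal is correct and takes essentially the same route as the paper, which does not reprove this lemma but cites it as Theorem 2.1 of Krupa--Szmolyan \cite{krupa2001relaxation}, whose proof is exactly your outline: Fenichel slow manifolds on the attracting branches $S_{l}$, $S_{r}$, blow-up analysis at the two nondegenerate folds, and a Poincar\'e return map that is an exponentially strong contraction, giving existence, uniqueness, strong attraction, and Hausdorff convergence of $\Gamma_{\epsilon}$ to $\Gamma$. No gap to report.
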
 
This lemma is the famous result as THEOREM 2.1 in \cite{krupa2001relaxation}, applying Fenichel Slow manifold theory and fundamental knowledge of geometric singular perturbation can easily prove it, we do not repeat this here. Besides, M. Krupa et al. mentioned more complex issues such as canard explosion, brief appearance and disappearance of limit cycles caused by Hopf bifurcation \cite{krupa2001relaxation}, which are not the focus of this paper. Actually we are just interested in existence and robustness of the limit cycle $\Gamma_{\epsilon}$ named relaxation oscillation.
\par Note that we value the oscillatory components in terms of concentrations of species, so the limit cycle $\Gamma_{\epsilon}$ should be limited in the first quadrant of phase plane portrait. And for the convenience of designing oscillator, we also limit function $g$ as linear function $g\left ( x,y \right ) = x + \mu y + \lambda $, $\mu \in \mathbb{R}$, $\lambda \in \mathbb{R}$.

\begin{lemma}
	Assume the Hypothesis 3.1 and $g\left ( x,y \right ) = x + \mu y + \lambda $, $\mu \in \mathbb{R}$, $\lambda \in \mathbb{R}$, then system(4) has and only has equilibrium points on the manifold $S_{m}$. Moreover, we suppose that $\mu \leq 0$ and $\left |  \mu\right |$ is small enough, then system(4) can only have one unique equilibrium point E, and E is unstable.
\end{lemma}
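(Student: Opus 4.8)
The plan is to separate the localization of equilibria (the first assertion) from their existence, uniqueness, and stability (the second). An equilibrium of system (4) is a common zero of $f$ and $g$; since $f=0$ cuts out the critical manifold $S=S_{l}\cup\{B\}\cup S_{m}\cup\{D\}\cup S_{r}$, every equilibrium lies on $S$, and it only remains to decide which branch. On $S$ the reduced (slow) flow is obtained by differentiating $f(x,\varphi(x))=0$, which gives $\dot{x}=-(f_{y}/f_{x})\,g(x,\varphi(x))$, a relation valid because $f_{x}\neq0$ on each branch (condition 2 of Hypothesis 3.1). Condition 4 states $\dot{x}>0$ on $S_{l}$ and $\dot{x}<0$ on $S_{r}$, forcing $g\neq0$ on $S_{l}\cup S_{r}$; together with $g\neq0$ at the two fold points $B,D$ (condition 3) this rules out equilibria off $S_{m}$, proving the first claim.

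For the second claim I would first reduce the equilibrium condition on $S_{m}$ to a single scalar equation. Substituting the prescribed linear $g$ and $y=\varphi(x)$, an equilibrium corresponds to a root of $h(x):=x+\mu\varphi(x)+\lambda$ in the open interval $(x_{m},x_{M})$. The key preliminary is to fix the sign of $\varphi'$: since $\varphi$ attains its minimum at $x_{m}$ and maximum at $x_{M}$ with $x_{m}<x_{M}$, the S-shape gives $\varphi'<0$ on $S_{l}$ and $S_{r}$ and $\varphi'>0$ on $S_{m}$. Combined with condition 4 this determines the slow-flow value $g=\dot{y}=\varphi'\dot{x}$: on $S_{l}$ one gets $g<0$, while on $S_{r}$ one gets $g>0$. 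Passing to the limit at the folds and invoking $g\neq0$ there (condition 3) yields $h(x_{m})<0<h(x_{M})$, so by the intermediate value theorem $h$ has at least one root in $(x_{m},x_{M})$, which is the equilibrium $E$.

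Uniqueness then follows from monotonicity: $h'(x)=1+\mu\varphi'(x)$, and since $\varphi'$ is continuous hence bounded on the compact interval $[x_{m},x_{M}]$ with $\varphi'>0$ there, the hypotheses $\mu\le0$ and $|\mu|$ small (precisely $|\mu|<1/\max_{[x_{m},x_{M}]}\varphi'$) give $h'>0$; thus $h$ is strictly increasing and its root, hence $E$, is unique. For instability I would compute the Jacobian of (4) at $E$, namely $\begin{pmatrix} f_{x}/\epsilon & f_{y}/\epsilon \\ 1 & \mu\end{pmatrix}$ using $g_{x}=1$ and $g_{y}=\mu$. Its trace equals $f_{x}/\epsilon+\mu$, and since $E\in S_{m}$ forces $f_{x}>0$ while $0<\epsilon\ll1$, the term $f_{x}/\epsilon$ dominates the bounded $\mu$, so the trace is positive; a planar equilibrium with positive trace cannot be stable, giving instability of $E$.

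I expect the sign bookkeeping behind $h(x_{m})<0<h(x_{M})$ to be the main obstacle, since it is the single place where all three structural ingredients must be combined correctly: the S-shape (sign of $\varphi'$), the direction of the slow flow (condition 4), and non-degeneracy at the folds (condition 3). An error in the orientation of the singular trajectory $\Gamma$ would flip these inequalities and collapse the existence argument. By contrast, the monotonicity of $h$ and the trace computation are routine once the quantitative smallness of $|\mu|$ and the choice $0<\epsilon\ll1$ are recorded.
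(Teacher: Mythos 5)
Your proof is correct, and its first half is essentially the paper's own argument: both use the reduced flow $\dot{x}=g/\varphi'$ together with the fact that $\varphi'$ has the same (negative) sign on $S_{l}$ and $S_{r}$, plus $g\neq 0$ at the folds, to force every equilibrium onto $S_{m}$. Where you genuinely depart from the paper is in the second claim. For existence and uniqueness the paper essentially asserts that, for $\mu\le 0$ with $|\mu|$ small, the line $g=0$ meets $S_{m}$ exactly once; you instead reduce to the scalar function $h(x)=x+\mu\varphi(x)+\lambda$ on $[x_{m},x_{M}]$, obtain existence from $h(x_{m})<0<h(x_{M})$ and the intermediate value theorem, and uniqueness from $h'=1+\mu\varphi'>0$ under the explicit bound $|\mu|<1/\max_{[x_{m},x_{M}]}\varphi'$ --- this quantifies ``small enough,'' which the paper never does. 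For instability the paper stays in the singular picture: it shows the slow flow on $S_{m}$ points away from $E$ on both sides ($\dot{x}<0$ on $S_{mb}$, $\dot{x}>0$ on $S_{ma}$) and combines this with the repelling fast direction from condition 2 of Hypothesis 3.1; you instead linearize the full $\epsilon>0$ system and note $\mathrm{tr}\,J(E)=f_{x}/\epsilon+\mu>0$ because $f_{x}>0$ on $S_{m}$ and $0<\epsilon\ll 1$, so at least one eigenvalue has positive real part. Your route is the more rigorous one, since the paper's repulsion argument strictly concerns the reduced and layer problems rather than the actual Jacobian of system (4); the paper's version, in exchange, describes the phase portrait near $E$ (trajectories leaving $E$ get funnelled onto the relaxation cycle), which is the picture Theorem 3.1 later exploits.
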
	

\newtheorem*{proof}{Proof}
\begin{proof}
The last one in Hypothesis 3.1 says that $\dot{x} > 0$ on $S_{l}$ and $\dot{x} < 0$ on $S_{r}$ while $\dot{x} = \frac{g\left ( x,\varphi \left ( x \right ) \right )}{\varphi^{'} \left ( x \right)}$. The sign of $\varphi^{'} \left ( x \right)$ on $S_{l}$ is same as the one on $S_{r}$, so the graph of $g\left ( x,y \right ) = 0$ must be between $S_{l}$ and $S_{r}$, which leads to equilibrium points on $S_{m}$. While equilibrium points on $S_{l}$ or $S_{r}$ would destroy the consistent result of the last one in Hypothesis 3.1, system(4) has and only has equilibrium points on $S_{m}$. \\
Moreover, suppose that $\mu \leq 0$ and $\left |  \mu\right |$ is small enough, then system(4) has a unique equilibrium point E, and E must lie on manifold $S_{m}$. We define the part of $S_{m}$ below E as $S_{mb}$, and the part above E as $S_{ma}$. Then the sign of $g\left ( x,\varphi \left ( x \right ) \right )$ on $S_{mb}$ is same as the one on $S_{l}$, while signs of $\varphi^{'} \left ( x \right)$ are different. So the sign of $\dot{x}$ on $S_{mb}$ is negative. Similarly, $\dot{x} > 0$ on $S_{ma}$. This means that initial points close to E on $S_{m}$ would stay away from E. Combined with the second one in Hypothesis 3.1, equilibrium point E is unstable.  $\hfill\blacksquare$
\end{proof}
  
 Based on Hypothesis 3.1 and $g\left ( x,y \right ) = x + \mu y + \lambda $, system(4) can actually have odd equilibrium points on $S_{m}$, which would cause strange dynamics and complicate model analysis. However, our focus is not to analyze the dynamical properties of system(4) in any case. We limit $g$ as linear function along with range of parameter $\mu$
 in order to simplify complexity of our oscillator model and achieve the desired dynamic behaviour.

We conclude Theorem 3.1 as follows:
\newtheorem{theorem}{Theorem}[section]
\begin{theorem}
    Assume Hypothesis 3.1 and add that:
	\begin{enumerate}
		\item Singular trajectory $\Gamma$ with its small tubular neighborhood $U$ strictly lies in the first quadrant.
		\item $g\left ( x,y \right ) = x + \mu y + \lambda $, $\mu \leq 0$ and $\left |  \mu\right |$ is small enough.
\end{enumerate}
    \par Then for sufficiently small $\epsilon$, relaxation oscillation exists in the first quadrant of phase plane portrait and furthermore, all of trajectories starting from this quadrant except the equilibrium point reach the limit cycle $\Gamma_{\epsilon}$ finally.
\end{theorem}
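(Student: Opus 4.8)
The statement bundles together the two preceding lemmas and upgrades their local conclusions to a global one, so the plan is to separate the existence claim from the basin claim. For the existence part I would argue as follows. System~(4) is planar, which is the decisive structural fact, because it makes the Poincar\'e--Bendixson theorem available. By Lemma~3.1, for $\epsilon$ small there is a unique, strongly attracting limit cycle $\Gamma_\epsilon$ contained in the tubular neighborhood $U$ of the singular trajectory $\Gamma$; the added hypothesis that $U$ lies strictly in the first quadrant then immediately places $\Gamma_\epsilon$ in the first quadrant, which is exactly the relaxation oscillation we want. By the added hypothesis on $g$ together with Lemma~3.2 there is a unique equilibrium $E$, and it is unstable. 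I would strengthen ``unstable'' to ``source'' here, since this is what the global argument needs: the Jacobian at $E$ is $\begin{pmatrix} \epsilon^{-1}\partial_x f & \epsilon^{-1}\partial_y f \\ 1 & \mu \end{pmatrix}$, and on $S_m$ one has $\partial_x f>0$ and, differentiating $f(x,\varphi(x))=0$, $\partial_y f=-\partial_x f/\varphi'<0$; hence the trace $\epsilon^{-1}\partial_x f+\mu$ is positive and the determinant $\epsilon^{-1}(\mu\partial_x f-\partial_y f)$ is positive once $\epsilon$ and $|\mu|$ are small, so $E$ is a repeller.

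The core of the proof is the basin claim, and here the plan is a trapping-region construction followed by Poincar\'e--Bendixson. First I would build a compact, positively invariant region $\Omega$ in the open first quadrant containing both $\Gamma_\epsilon$ and $E$. The natural candidate exploits the fast--slow structure: the fast field $\epsilon^{-1}f$ attracts trajectories to the branches $S_l,S_r$ (where $f=0$, $\partial_x f<0$), and the slow flow there drives them toward the fold points, so trajectories cannot escape to infinity and stay in a bounded neighborhood of the critical manifold. Concretely, on a large rectangle the right edge has $f<0$ and the left edge has $f>0$, giving inward $\dot x$, while boundedness in $y$ follows from the slow drift along the attracting branches back toward the fold values $y_m,y_M$; hypothesis~1 guarantees this region can be taken inside the first quadrant. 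I would then excise a small open disk $D$ around $E$: since $E$ is a source, for $D$ small the field points out of $D$, so $\Omega\setminus D$ is compact, positively invariant and equilibrium-free. Poincar\'e--Bendixson then forces the $\omega$-limit set of every forward trajectory in $\Omega\setminus D$ to be a periodic orbit.

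It remains to identify that periodic orbit with $\Gamma_\epsilon$, i.e. to rule out other nested cycles. For this I would invoke the fast--slow structure again: for $\epsilon$ small any periodic orbit of a relaxation system must track the attracting branches and jump near the fold points, so it lies in a tubular neighborhood of the singular cycle $\Gamma$, where Lemma~3.1 already gives uniqueness; hence the only periodic orbit is $\Gamma_\epsilon$ and every trajectory in $\Omega\setminus D$ converges to it. Finally, a trajectory started anywhere in the first quadrant other than $E$ is pulled into $\Omega$ by the fast flow (or already lies in it), and a trajectory started in $D\setminus\{E\}$ leaves $D$ because $E$ is a source; either way it enters $\Omega\setminus D$ and therefore limits onto $\Gamma_\epsilon$.

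The main obstacle is the uniqueness step, not the soft topology. Poincar\'e--Bendixson only yields \emph{some} cycle, and the genuinely delicate point is showing that no extra limit cycle can sit between $E$ and $\Gamma_\epsilon$; making ``every periodic orbit lies near $\Gamma$'' rigorous requires a real geometric singular perturbation / entry--exit estimate rather than a purely planar argument, and this is where I expect the work to concentrate. A secondary nuisance is the trapping-region bookkeeping: verifying the inward-pointing sign of the field on $\partial\Omega$ and, in particular, that forward trajectories remain in the first quadrant relies on the precise behaviour of $f$ near the two folds and on the positivity furnished by added hypothesis~1, so these sign checks must be carried out with the S-shape of the critical manifold firmly in hand.
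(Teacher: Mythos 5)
Your existence argument coincides with the paper's: both combine Lemma 3.1 (the Fenichel/GSP perturbation of the singular cycle $\Gamma$) with added hypothesis 1 to place $\Gamma_{\epsilon}$ in the first quadrant. Where you genuinely diverge is the basin claim. The paper builds no trapping region and never invokes Poincar\'e--Bendixson: it asserts --- attributing this to Lemma 3.2, which in fact only locates equilibria --- that the invariant sets of system (4) are exactly $E$ and $\Gamma_{\epsilon}$, and then finishes with an informal fast--slow description (the horizontal fast flow sweeps any initial point onto a neighborhood of $S_{l}\cup S_{r}$, after which the trajectory follows the cycle). Your route --- upgrading ``unstable'' to ``source'' via the trace/determinant computation (which is correct: on $S_{m}$ one has $\partial_{x}f>0$ and $\varphi'>0$, hence $\partial_{y}f=-\partial_{x}f/\varphi'<0$, so trace and determinant are both positive for small $\epsilon$ and $|\mu|$), excising a disk around $E$, trapping an annulus, applying Poincar\'e--Bendixson, and then ruling out extra cycles by a GSP estimate --- is the classical rigorous planar argument, and the step you flag as the crux (no second periodic orbit between $E$ and $\Gamma_{\epsilon}$) is exactly the point the paper's proof glosses over with its unsupported ``invariant set'' claim; your version costs more work but actually closes that gap, while the paper's is shorter because it implicitly assumes it. Two cautions on your side: a plain rectangle cannot serve as outer boundary (when $\mu=0$ the sign of $g=x+\lambda$ on a horizontal edge flips at $x=-\lambda$), so the outer boundary must be flow-adapted, as you anticipate; and the trapping region generally cannot be taken inside the first quadrant, because system (4) does not preserve that quadrant --- an orbit starting at small $x$ and very large $y$ is swept onto $S_{l}$, which at that height may lie at $x<0$, and only later re-enters. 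Since the theorem claims eventual convergence rather than invariance, building $\Omega$ around the full critical manifold costs nothing; only $\Gamma_{\epsilon}$ itself must lie in the quadrant.
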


\begin{proof}
 Lemma 3.1 ensures the existence of $\Gamma_{\epsilon}$, which is closely related to the slow manifold. In Fenichel Slow Manifold Theorem \cite{fenichel1979geometric}, slow manifold $M_{\epsilon}$ falls in the $O\left(\epsilon\right)$ neighborhood of normal hyperbolic manifold $M$. So cycle $\Gamma_{\epsilon}$ can be viewed as perturbation of trajectory $\Gamma$ under parameter $\epsilon$. For sufficiently small $\epsilon$, the relaxation oscillation $\Gamma_{\epsilon}$ approaches $\Gamma$ in the Hausdorff distance and exists in the first quadrant.  
 Lemma 3.2 shows that the invariant set of system(4) consist of singular trajectory $\Gamma_{\epsilon}$ and equilibrium point E. While E is unstable, the unique stable invariant set in first quadrant is $\Gamma_{\epsilon}$. 
As is shown in Fig.2, trajectory with initial points on $S_{l} \cup S_{r}$ or nearby goes along the cycle $\Gamma_{\epsilon}$ immediately, while trajectory with initial points somewhere else in the first quadrant except E pours along horizontal flows at the beginning until reaching neighborhood of $S_{l}$ or $S_{r}$, then oscillating along $\Gamma_{\epsilon}$. So all of trajectories starting from first quadrant reach the limit cycle $\Gamma_{\epsilon}$ finally.   $\hfill\blacksquare$
\end{proof}

\par The sufficiently small parameter $\epsilon$ leads to two time scales in system(4), and Theorem 3.1 is actually the classical conclusion of fast-slow system. The main contribution of $\epsilon$ is making trajectory with initial points away from the neighborhood of $S_{l}$ or $S_{r}$ converge to left part or right part of $\Gamma_{\epsilon}$ quickly, which results in abrupt transitions between the phases of $x$.
\par Note that we show the independence between relaxation oscillation structure and initial value selection. However, we would not talk about robustness associated with parameters except $\epsilon$ which may exist in system(4), for that properties of oscillation such as amplitude and period are strictly dependent on these parameters.
\par We give an example with the form of relaxation oscillation based on system(4).
\begin{example}
 \begin{equation}\begin{aligned}
 \frac{\mathrm{d} x}{\mathrm{d} t} &= \left (-x^3+6x^2-9x+5-y \right ) / \epsilon\ ,\\ 
 \frac{\mathrm{d} y}{\mathrm{d} t} &= x - 2\ .
 \end{aligned}\end{equation}
\end{example}
It is easy to verify that choice of function $f$ and $g$ satisfies the assumptions of Theorem 3.1, with $\epsilon=0.001$ and initial point$\left (1,1 \right )$, we get the simulation result in Fig.3.
 \begin{figure}[!t]
 	\centerline{\includegraphics[width=\columnwidth]{./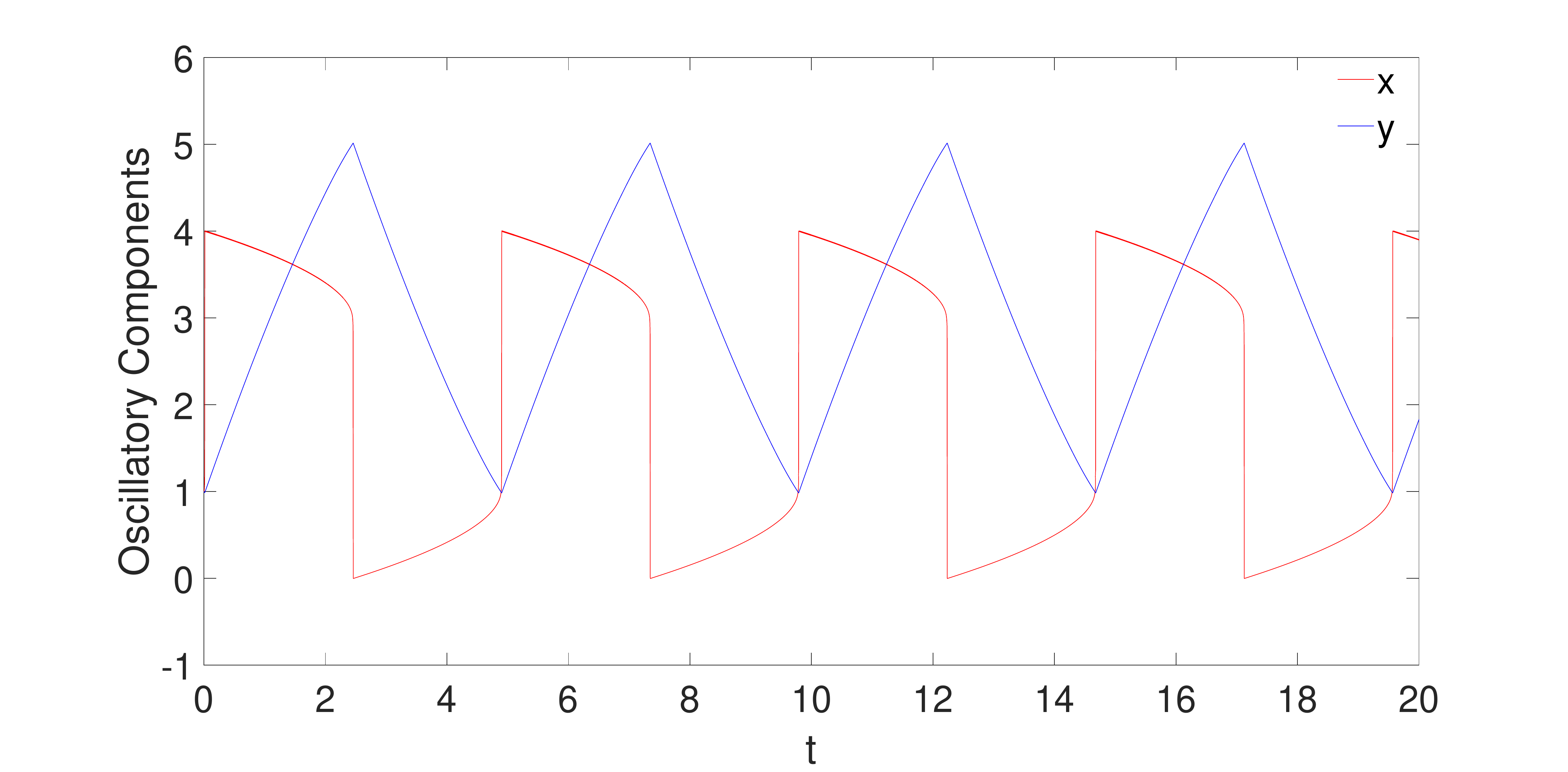}}
 	\caption{Simulation result of system(5)}.
 	\label{fig3}
 \end{figure}
In the previous section we have shown how to use mass-action kinetics to model chemical reaction networks as ODEs. However, not all forms of ODEs can be converted back into chemical reactions \cite{hangos2011mass}. In simple terms, if there is a negative term in the ODE expression corresponding to $x$, the value of $x$ will decrease, then the species $X$ corresponding to the chemical reactions should exist as the reactant(note that $\dot{x}$ refers to concentration of species $X$). Therefore, the negative term in the ODE expression corresponding to $x$ must factor in $x$. Example 3.1 actually makes no sense in CRN.
Given this, we modify the functions in Example 3.1 as following:
\begin{example}
 \begin{equation}\begin{aligned}
 \frac{\mathrm{d} x}{\mathrm{d} t} &= \left (-x^3+6x^2-9x+5-y \right ) x/ \epsilon\ ,\\ 
 \frac{\mathrm{d} y}{\mathrm{d} t} &= \left (x - 2 \right)y\ . 
 \end{aligned}\end{equation}
\end{example}
Although the modification does not destroy the structure of critical manifold, two additional equilibrium points $(0,0)$ and $(x_{0},0)$($-x_{0}^3+6x_{0}^2-9x_{0}+5=0$) emerge, which are saddle points. We just have to avoid the points on the axes as initial values, then Theorem 3.1 still holds. With the same values of parameter and initial point as Example 3.1, we give the simulation for Example 3.2 in Fig.4.
\begin{figure}[!t]
 	\centerline{\includegraphics[width=\columnwidth]{./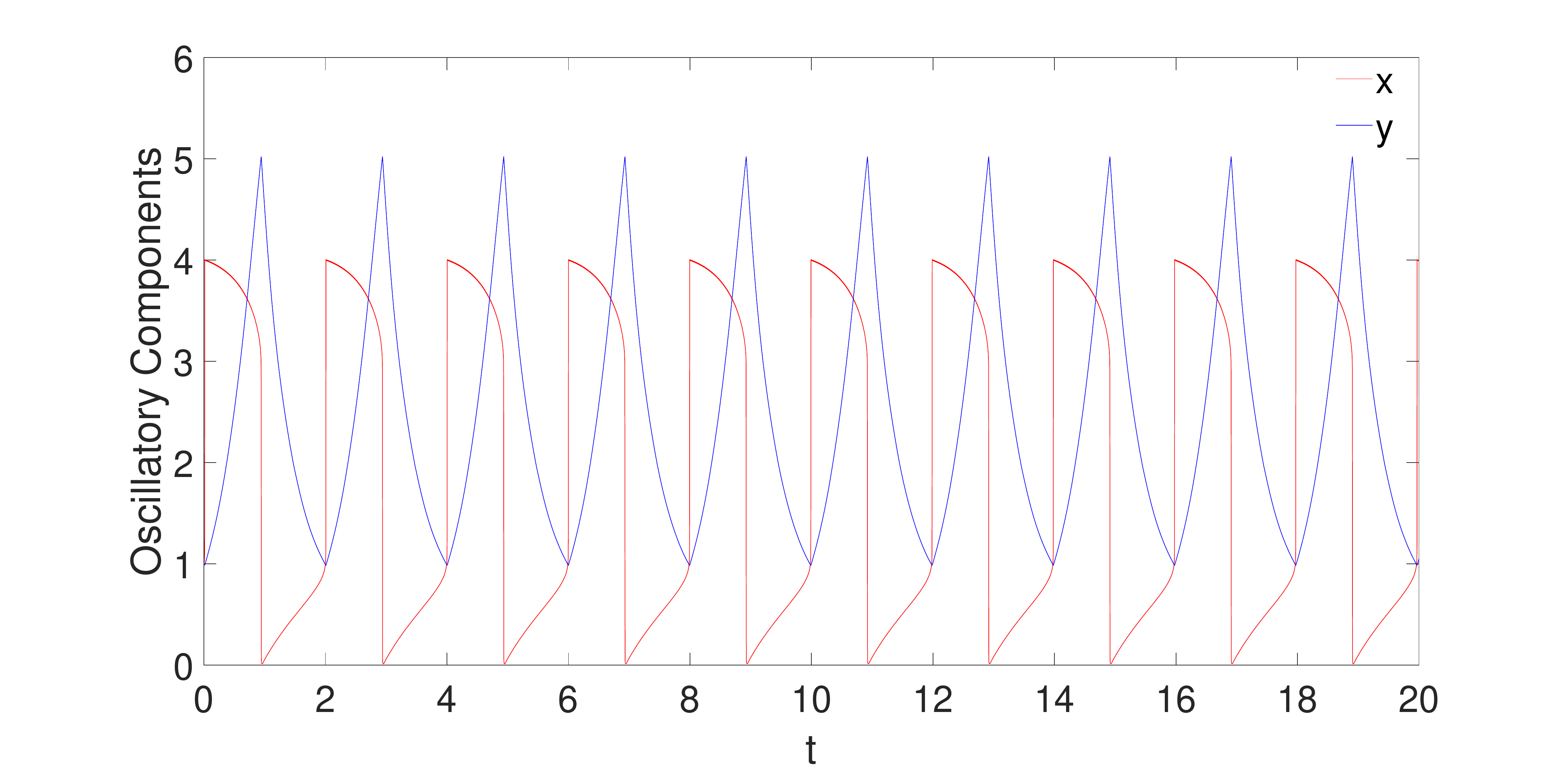}}
 	\caption{Simulation result of system(6)}.
 	\label{fig4}
 \end{figure}
 
We supplement the corresponding CRNs as follows:
\begin{align*}
    4X&\overset{1/\epsilon }{\rightarrow}3X\ ,\\
3X&\overset{6/\epsilon }{\rightarrow}4X\ , \\
2X&\overset{9/\epsilon }{\rightarrow}X\ , \\
X&\overset{5/\epsilon }{\rightarrow}2X\ , \\
X+Y&\overset{1/\epsilon }{\rightarrow}Y\ , \\
X+Y&\overset{1}{\rightarrow}2Y\ , \\
2Y&\overset{2}{\rightarrow}Y\ .
\end{align*}

\par Note that our goal is to find a simplest model which can generate a pair of symmetric clock signals as we define in previous section based on relaxation oscillation, but the oscillatory components in system(4) can not reach our requirement for that neither $x$ nor $y$ could stay near zero enough for some time and they are not actually symmetric. So we need to use the oscillatory structure of $x$ to construct new pair of components to act as symmetric clock signals by coupling $x$ unidirectional to the module we will introduce next.

\subsection{Coupled with Modified Truncated Subtraction Module} 
We first introduce the truncated subtraction module mentioned in \cite{buisman2009computing,vasic2020}:
\begin{align*}
X_{1} &\to X_{1}+X_{3}\ ,\\
X_{2} &\to X_{2}+X_{4}\ , \\
X_{3} &\to \varnothing\ ,  \\
X_{3}+X_{4} &\to \varnothing\ . 
\end{align*}
which computes truncated subtraction corresponding to equilibrium of ODEs:
\begin{equation*}
x_{3} = \begin{cases}
x_{1}-x_{2}, & \text{ if $x_{1}>x_{2}$}  \\
0, & \text{ otherwise } 
\end{cases}
\end{equation*}
Based on this, we add the outflow reaction of species $X_{4}$ and consider the influence of reaction rate of the last reaction in order to treat output species $X_{3}$ and $X_{4}$ as our symmetric clock signals:
\begin{equation}\begin{aligned}
X_{1} &\to X_{1}+X_{3}\ ,\\
X_{2} &\to X_{2}+X_{4}\ , \\
X_{3} &\to \varnothing\ ,  \\
X_{4} &\to \varnothing\ ,  \\
X_{3}+X_{4} &\overset{c}{\rightarrow} \varnothing\ . 
\end{aligned}\end{equation}
ODEs of reaction network(7) express as follows:
 \begin{equation}\begin{aligned}
 \frac{\mathrm{d} x_{3}}{\mathrm{d} t} &= x_{1}-x_{3}-cx_{3}x_{4}\ , \\ 
 \frac{\mathrm{d} x_{4}}{\mathrm{d} t} &= x_{2}-x_{4}-cx_{3}x_{4}\ . 
 \end{aligned}\end{equation}
Take $x_{1}$ and $x_{2}$ as inputs, if we value parameter $c$ as zero i.e. species $X_{3}$ does not couple with $X_{4}$, then reaction network (7) just load value of $x_{1}$ and $x_{2}$ separately into $x_{3}$ and $x_{4}$. The coupling parameter $c$ complicates the dynamic behaviour that the ODEs (8) can induce, which we would analysis in detail in next section.
\par Till now, we conclude our universal oscillator model in the context of ODEs:
\begin{equation}
    \begin{aligned}
    \frac{\mathrm{d} x}{\mathrm{d} t} &= \eta_{1}\eta_{2}(f(x)-y)x/\epsilon\ , \\
    \frac{\mathrm{d} y}{\mathrm{d} t} &= \eta_{1}\eta_{2}(x+\mu y+\lambda)y\ , \\
    \frac{\mathrm{d} u}{\mathrm{d} t} &= \eta_{2}(p-u-cuv)\ , \\
    \frac{\mathrm{d} v}{\mathrm{d} t} &= \eta_{2}(x-v-cuv)\ .
    \end{aligned}
\end{equation}
Where ODEs of $x$ and $y$ are just combination of system (4) and assumptions in Theorem 3.1 and the requirement for transformation from ODEs to CRNs, and we couple the expression of relaxation oscillation on $x$ with modified truncated subtraction module (8): Substitute input species $x$ to $x_{2}$ and utilize a constant $p$ as $x_{1}$. Parameters $\eta_{1}$ and $\eta_{2}$ are used to regulate the period of variables and can be inserted into corresponding reaction rates in CRNs.
\par We treat the value of $u$ and $v$ as the output and come back to Example 3.2 to show that corresponding species $U$ and $V$ could act as symmetric clock signals that we want.
\begin{example}
 \begin{equation}
    \begin{aligned}
    \frac{\mathrm{d} x}{\mathrm{d} t} &= \eta_{1}\eta_{2}(-x^3+6x^2-9x+5-y)x/\epsilon\ , \\
    \frac{\mathrm{d} y}{\mathrm{d} t} &= \eta_{1}\eta_{2}(x-\rho)y\ , \\
    \frac{\mathrm{d} u}{\mathrm{d} t} &= \eta_{2}(p-u-cuv)\ , \\
    \frac{\mathrm{d} v}{\mathrm{d} t} &= \eta_{2}(x-v-cuv)\ .
    \end{aligned}
\end{equation}
With $\eta_{1}=0.01$, $\eta_{2}=10$, $\epsilon=0.001$, $\rho=2$, $p=2$, $c=400$ and initial point $(1,1,0,0)$, we get simulation result as Fig.5.
\end{example}
 \begin{figure}[!t]
 	\centerline{\includegraphics[width=\columnwidth]{./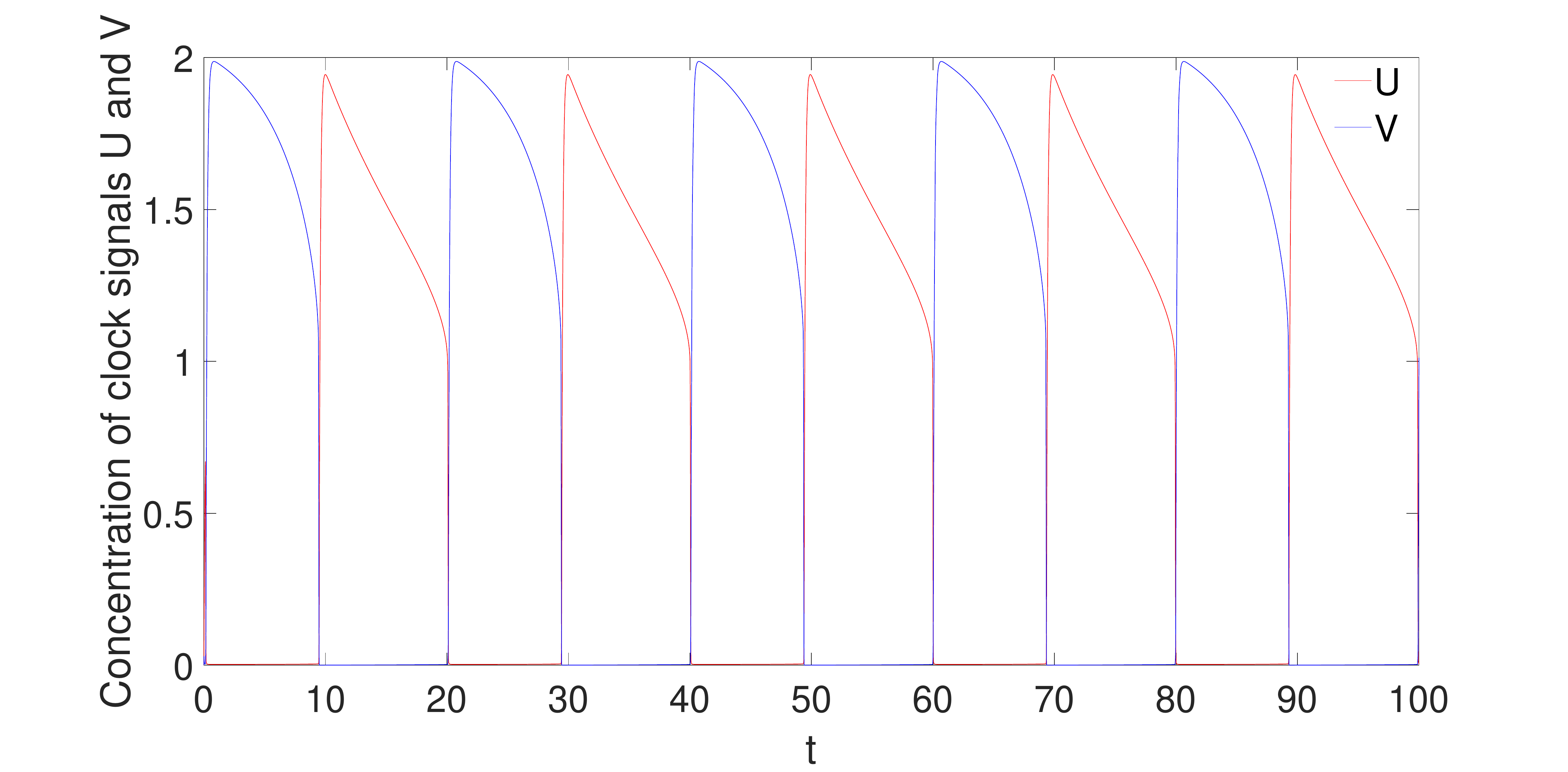}}
 	\caption{Simulation result of system(10)}.
 	\label{fig5}
 \end{figure}
\par Furthermore, return to the modified reaction module 1 and modified reaction module 2 in Example 2.3 and combine ODEs in system (3) and Example 3.3, we rewrite the whole ODEs as follows:
 \begin{equation}
    \begin{aligned}
    \frac{\mathrm{d} x}{\mathrm{d} t} &= \eta_{1}\eta_{2}(-x^3+6x^2-9x+5-y)x/\epsilon\ , \\
    \frac{\mathrm{d} y}{\mathrm{d} t} &= \eta_{1}\eta_{2}(x-\rho)y\ , \\
    \frac{\mathrm{d} u}{\mathrm{d} t} &= \eta_{2}(p-u-cuv)\ , \\
    \frac{\mathrm{d} v}{\mathrm{d} t} &= \eta_{2}(x-v-cuv)\ , \\
    \frac{\mathrm{d} x_{1}}{\mathrm{d} t} &= \eta_{3}(x_{2}-x_{1})u\ , \\
    \frac{\mathrm{d} x_{2}}{\mathrm{d} t} &= \eta_{3}(x_{1}+1-x_{2})v\ .
    \end{aligned}
\end{equation}
We add another parameter $\eta_{3}$ to the last two equations in order to ensure the accuracy of adjustment by species $U$ and $V$, and substitute the concentration of catalyst $X_{3}$ just as constant one. Choose $\eta_{3}=0.35$ and initial values of both $x_{1}$ and $x_{2}$ as zero, we get simulation of the Counter Model in Fig.6. Note that although this model was originally designed to perform the operation instruction like $x_{1}+=1$, values of $x_{1}$ and $x_{2}$ can both play the role of counter, with only difference in phase.
 \begin{figure}[!t]
 	\centerline{\includegraphics[width=\columnwidth]{./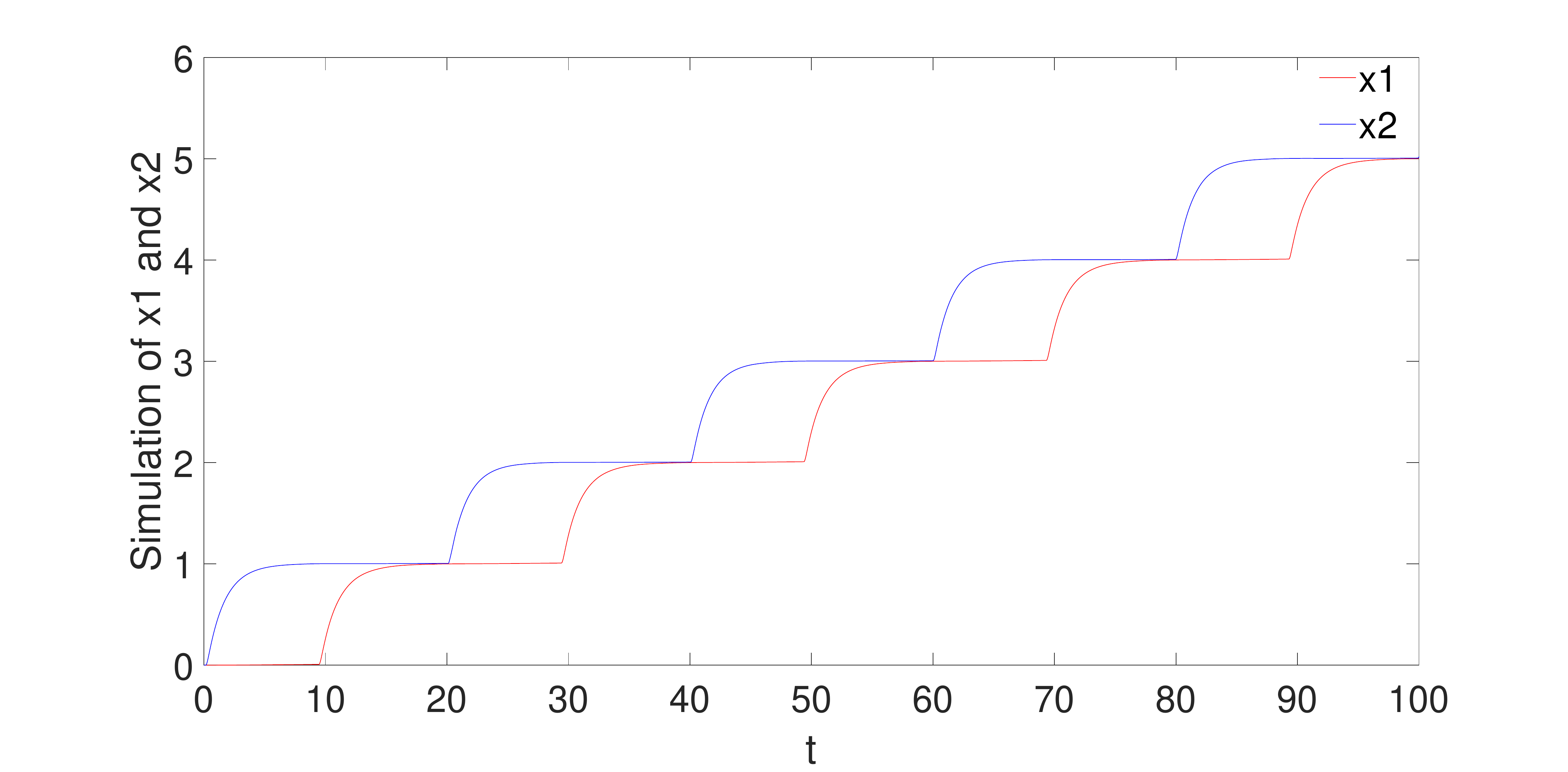}}
 	\caption{Simulation result of the Counter Model with components $x_{1}$ and $x_{2}$}.
 	\label{fig6}
 \end{figure}

\section{Analysis of Dynamic Behaviours of Universal Oscillator Model}
In this section we will explain the choice of parameters of the universal oscillator model in detail and show different dynamic behaviours the model can exhibit.
\subsection{Parameters in isolated system $x-y$ and $u-v$}
\par Let's firstly focus on system (9). The function $f(x)$ actually refers to critical manifold $S$ mentioned in section \uppercase\expandafter{\romannumeral3}, small enough $\epsilon$ results in different timescales between $x$ and $y$, which is foundation of analysis on relaxation oscillation. We choose $f(x)$ as a S-shaped function\cite{krupa2001relaxation} that lies in the first quadrant and the critical manifold $S$ has a unique intersection with the straight line $x+\mu y+\lambda$ on its middle segment i.e. the manifold $S_{m}$. We find that the structure in this form is general \cite{krupa2001relaxation,fernandez2020symmetric} and has its biochemical correspondence such as the FitzHugh-Nagumo system and the Oregonator model \cite{tyson1980target}. While in Example 3.3, we use cubic function to act as function $f(x)$ not only because polynomials are directly related to mass-action kinetics, but also in order to demonstrate the generality of our oscillator model. Note that the parameter $\rho$ should be between the two fold points of $f(x)$ i.e. $1<\rho<3$ for existence of relaxation oscillation, and if the equilibrium point of system $x-y$ lies close enough to the fold points($(1,1)$ and $(3,5)$ in Example 3.3), then there will be complex oscillations such as canard exposition and mixed-mode oscillation \cite{wechselberger2005existence}, which are not what we want. So we choose $1<\rho<3$ and let $\rho$ keep some distance with the two endpoint values. \\
\par Then in system $u-v$, isolate this system from system $x-y$ and assume that $x$ in the expression of $\frac{\mathrm{d} v}{\mathrm{d} t}$ is constant. When the value of parameter $c$ is large enough, $u$ and $v$ actually output the truncated subtraction between the value $p$ and $x$.
\begin{lemma}
    Assume parameter $c$ is large enough, then the system $u-v$ converges to the following approximate equilibrium depend on the magnitude of $x$ and $p$:
    \begin{equation*}
u = \begin{cases}
p-x, & \text{ if $p>x$}  \\
0, & \text{ otherwise } 
\end{cases}
\end{equation*}
\begin{equation*}
v = \begin{cases}
0, & \text{ if $p>x$}  \\
x-p, & \text{ otherwise } 
\end{cases}
\end{equation*}
\end{lemma}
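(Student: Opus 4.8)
The plan is to treat the statement as a claim about the unique equilibrium of the planar autonomous system obtained by freezing $x$ and $p$ as constants, together with a global-convergence claim and an asymptotic expansion in $c$. Since $\eta_{2}>0$ only rescales time, I would drop it and study $\dot{u}=p-u-cuv$, $\dot{v}=x-v-cuv$. First I would write the equilibrium equations $p-u-cuv=0$ and $x-v-cuv=0$ and extract two exact consequences: subtracting them gives the clean relation $u-v=p-x$, while either equation gives $c\,uv=p-u=x-v$. These two identities are the backbone of everything that follows.

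Next I would establish existence, uniqueness, and positivity of the equilibrium. Eliminating $v=u-(p-x)$ turns the system into the quadratic $cu^{2}+\bigl(1-c(p-x)\bigr)u-p=0$; since the product of its roots is $-p/c<0$, there is exactly one positive root $u^{*}$, and evaluating the quadratic at $u=p$ (where it equals $cpx\ge 0$) together with its value $-p<0$ at $u=0$ pins $u^{*}$ into the interval $(0,p)$, whence $v^{*}=(p-u^{*})/(cu^{*})>0$. So the equilibrium lies in the open first quadrant and is the only equilibrium there.

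For the convergence claim I would argue global asymptotic stability by elementary planar tools. The box $[0,p]\times[0,x]$ is forward invariant, since on $u=0$ one has $\dot{u}=p>0$, on $u=p$ one has $\dot{u}=-cpv\le 0$, and symmetrically for $v$; moreover the divergence of the vector field is $-2-c(u+v)<0$ throughout the first quadrant, so Bendixson--Dulac excludes closed orbits. Because the Jacobian at the equilibrium has trace $-2-c(u^{*}+v^{*})<0$ and determinant $1+c(u^{*}+v^{*})>0$, the equilibrium is locally asymptotically stable; combining forward invariance, the absence of periodic orbits, and Poincar\'e--Bendixson then yields convergence of every trajectory in the box to $(u^{*},v^{*})$.

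Finally I would extract the asymptotics as $c\to\infty$. The identity $u^{*}v^{*}=(p-u^{*})/c$ with $0<u^{*}<p$ forces $u^{*}v^{*}=O(1/c)\to 0$, so the product of the two non-negative coordinates vanishes while their difference stays fixed at $p-x$. A short case analysis then finishes: if $p>x$ then $u^{*}-v^{*}=p-x>0$ together with $u^{*}v^{*}\to 0$ forces $v^{*}\to 0$ and $u^{*}\to p-x$; if $p<x$ the roles of $u$ and $v$ swap; and $p=x$ sends both to $0$. This reproduces the stated truncated-subtraction equilibrium up to an $O(1/c)$ error, which is precisely the sense in which it is ``approximate.'' I expect the genuine work to be the global-convergence step rather than the algebra: the equilibrium computation and the $c\to\infty$ limit are short, but justifying that the system actually \emph{reaches} the equilibrium (rather than merely possessing one) is where the invariance and Bendixson--Dulac arguments earn their keep.
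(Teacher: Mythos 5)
Your proposal is correct, and it is considerably more thorough than what the paper actually does for this lemma. The paper's own proof is purely the equilibrium algebra: subtract the two equilibrium equations to eliminate the coupling term, substitute to get the quadratic $v^2+(p-x+\tfrac{1}{c})v-\tfrac{x}{c}=0$, drop the $O(1/c)$ terms for large $c$ to obtain $v^2+(p-x)v=0$, and select the nonnegative root according to the sign of $p-x$; convergence is simply asserted in the lemma statement and only addressed in the \emph{next} lemma (Lemma 4.2), where exponential convergence is proved by a comparison argument (the linear ODE for $u-v$ plus a Riccati-type differential inequality for $u$). Your route shares the same algebraic backbone ($u-v=p-x$, elimination to a quadratic), but you replace the paper's heuristic ``drop the $1/c$ terms'' step with a genuine limit statement ($u^*v^*=(p-u^*)/c\to 0$ with $u^*-v^*=p-x$ fixed, then a case analysis), and you actually prove the convergence claim via forward invariance of a box, Bendixson--Dulac, the Jacobian sign conditions, and Poincar\'e--Bendixson --- none of which appears in the paper. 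What the paper's division of labor buys is an explicit exponential \emph{rate} (via Lemma 4.2's comparison argument), which your Poincar\'e--Bendixson argument gives only locally through the trace/determinant conditions; what your approach buys is a self-contained, rigorous justification that the equilibrium is unique, positive, globally attracting (within the box), and genuinely close to the truncated subtraction with an $O(1/c)$ error, rather than an approximation obtained by discarding terms inside the equilibrium equation. The only loose end in your argument is that convergence is established for initial data in $[0,p]\times[0,x]$; for initial data elsewhere in the first quadrant you would note that $\dot{u}\le p-u$ and $\dot{v}\le x-v$ force every trajectory into any neighborhood of that box, after which your argument applies.
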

\begin{proof}
 The equilibrium of this system actually expresses as $p-u-cuv=0$ and $x-v-cuv=0$. After a simple substitution, we get $v^2+(p-x+\frac{1}{c})v-\frac{x}{c}=0$. Since the value of $c$ is large enough, we can simplify it into $v^2+(p-x)v=0$, which has two solutions as $v=0$ and $v=x-p$, corresponding to the magnitude of $x$ and $p$. Situation of $u$ is similar.  $\hfill\blacksquare$
\end{proof}
\par Give $x$ back to oscillator as relaxation oscillation, how $u$ and $v$ follow the periodic oscillation of x to produce similar periodic behaviour depends not only on equilibrium of the isolated system $u-v$ with constant $x$, but also convergence speed of $u$ and $v$. Actually, equilibrium can just tell us the long term behaviour of an ODE system, while the periodic change in $u$ and $v$ is a real-time response to oscillation of input $x$. So restrict to the isolated system $u-v$, we first give a lemma on its exponential convergence:
\begin{lemma}
    For the isolated system $u-v$ as follows:
    \begin{equation}\begin{aligned}
    \frac{\mathrm{d} u}{\mathrm{d} t} &= \eta_{2}(p-u-cuv)\ , \\
    \frac{\mathrm{d} v}{\mathrm{d} t} &= \eta_{2}(x-v-cuv)\ . \\
    \end{aligned}\end{equation}
    Parameter $p$ and $x$ are positive constant and different from each other, $\eta_{2}>0$, $c>0$ and $c$ is large enough. Then the system converges to approximate equilibrium $(p-x,0)$ or $(0,x-p)$ at exponential speed.
\end{lemma}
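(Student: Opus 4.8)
The key to the whole argument is a linearizing change of variables. Set $w = u - v$ and subtract the two equations of system (12): the quadratic coupling $cuv$ cancels identically, leaving the scalar \emph{linear} equation $\dot w = \eta_{2}\bigl((p-x) - w\bigr)$. Hence $w(t) = (p-x) + \bigl(w(0)-(p-x)\bigr)e^{-\eta_{2} t}$, so the difference $u-v$ tends to $p-x$ at exponential rate $\eta_{2}$ from every initial condition, and this rate does not depend on $c$. This observation already pins down the asymptotic relation between the two clock signals and, as I explain below, turns out to govern the overall convergence rate.

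With $w$ understood, I would reduce the system to a single scalar equation. Substituting $u = v + w$ into the $v$-equation gives $\dot v = \eta_{2}\bigl(x - v - c\,v(v+w)\bigr)$, which is an exponentially decaying nonautonomous perturbation of the autonomous equation $\dot v = -\eta_{2}\bigl(c v^{2} + (1+c(p-x))v - x\bigr)$. The bracket is, up to the factor $c$, exactly the quadratic $v^{2} + (p-x+1/c)v - x/c = 0$ derived in Lemma 4.1, so its unique nonnegative root $v^{\ast}$ is the true equilibrium: $v^{\ast} = O(1/c)$ when $p>x$ and $v^{\ast} = (x-p)+O(1/c)$ when $x>p$. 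Differentiating the right-hand side at $v^{\ast}$ yields $-\eta_{2}\bigl(1 + c(2v^{\ast} + p - x)\bigr)$, which is strictly negative of order $\eta_{2} c\,|p-x|$ (nonzero precisely because the hypothesis $p\neq x$ also selects which of the two stated equilibria is relevant), so $v^{\ast}$ is exponentially stable for the unforced scalar flow, with a fast rate. Since $u = w + v$, convergence of $v$ to $v^{\ast}$ together with $w\to p-x$ forces $(u,v)$ to the exact equilibrium, which lies within $O(1/c)$ of the stated approximate value $(p-x,0)$ or $(0,x-p)$; moreover the slowest decaying mode is the $w$-mode, so the net convergence rate is $\eta_{2}$.

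The one step needing genuine care, and the main obstacle, is upgrading this ``frozen-$w$'' analysis to a rigorous exponential bound for the true perturbed $v$-equation. I would handle it with a cascade (input-to-state stability) argument: the $v$-subsystem is driven by the exponentially decaying input $w(t)-(p-x)$, and the exponential stability of $v^{\ast}$ in the unforced flow lets this decaying input be absorbed, provided trajectories remain in the region where the parabola estimate is valid. This in turn requires forward invariance and boundedness of the first quadrant, which is immediate: $\dot u \le \eta_{2}(p-u) \le 0$ once $u\ge p$ and $\dot v \le \eta_{2}(x-v)\le 0$ once $v\ge x$, while the vector field points inward on both axes, so a rectangle containing the initial data is trapping.

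If one prefers to avoid the cascade bookkeeping entirely, I would fall back on a phase-plane argument for global convergence. A direct computation gives, at every point of the closed first quadrant, $\operatorname{tr} J = -\eta_{2}\bigl(2 + c(u+v)\bigr) < 0$ and $\det J = \eta_{2}^{2}\bigl(1 + c(u+v)\bigr) > 0$, so the unique equilibrium is locally exponentially stable and, since the divergence of the vector field is everywhere negative, the Bendixson--Dulac criterion rules out periodic orbits. The Poincar\'e--Bendixson theorem applied inside the trapping rectangle then sends every trajectory to the equilibrium, and the linearization supplies the exponential rate near it. The decoupling route is cleaner and yields the rate explicitly, so I would present it first and keep this phase-plane argument as a remark.
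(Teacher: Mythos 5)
Your proposal is correct, and its opening move coincides exactly with the paper's: the published proof also begins by observing that $\frac{\mathrm{d}(u-v)}{\mathrm{d}t} = \eta_{2}\bigl((p-x)-(u-v)\bigr)$, so the difference variable decouples and converges exponentially at rate $\eta_{2}$. After that the two arguments part ways. The paper inserts the resulting bound $-v < (p-x)-u+ke^{-\lambda t}$ into the $u$-equation as a one-sided differential inequality, compares with the autonomous quadratic ODE $\dot u = \eta_{2}\bigl(-cu^{2}+(c(p-x)+ck-1)u+p\bigr)$, and invokes its explicit logistic-type solution to conclude exponential convergence (and symmetrically for $v$). You instead substitute $u=v+w$ exactly, obtaining a scalar Riccati equation for $v$ forced by an exponentially decaying input, and close via linearization at the true root $v^{\ast}$ plus an ISS/cascade absorption supported by the trapping rectangle. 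Your route is tighter on two counts: the paper's comparison root $u_{r}$ is displaced by the constant $k$ coming from the bound on $u-v$, and only the upper estimate is actually derived (the matching lower bound is left implicit), whereas you identify the exact equilibrium, its $O(1/c)$ distance from the nominal $(p-x,0)$ or $(0,x-p)$, and the correct rates. Indeed, a short computation shows the Jacobian at the equilibrium has eigenvalues exactly $-\eta_{2}$ and $-\eta_{2}\bigl(1+c(u^{\ast}+v^{\ast})\bigr)$, confirming your claim that the $w$-mode is the slow one and the net rate is $\eta_{2}$, independent of $c$. Finally, your Bendixson--Dulac/Poincar\'e--Bendixson fallback has no counterpart in the paper and is the only argument on either side that gives \emph{global} convergence in the closed quadrant without comparison bookkeeping; its cost is that the exponential rate is only obtained near the equilibrium, so keeping it as a remark, as you propose, is the right division of labor.
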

\begin{proof}
The approximate equilibrium is shown in Lemma 4.1, we just focus on exponential convergence of this system. It is obvious that $u-v$ converges to $p-x$ at exponential speed for that $\frac{\mathrm{d} (u-v)}{\mathrm{d} t} = \eta_{2}((p-x)-(u-v))$. Then we can find $k>0$ and $\lambda>0$ satisfying $\left |  (u-v)-(p-x)\right |< ke^{-\lambda t}$ i.e. $(p-x)-u-ke^{-\lambda t}<-v<(p-x)-u+ke^{-\lambda t}$. Substitute into expression of $\frac{\mathrm{d} u}{\mathrm{d} t}$, we get 
\begin{equation}\begin{aligned}
\frac{\mathrm{d} u}{\mathrm{d} t}&<\eta_{2}(p-u-cu^{2}+c(p-x)u+cke^{-\lambda t}u)\\
&<\eta_{2}(-cu^2+(c(p-x)+ck-1)u+p)\ .
\end{aligned}\end{equation}
Without loss of generality, we only consider the case $p>x$($p<x$ is similar).
Then for following system:
\begin{equation*}
    \frac{\mathrm{d} u}{\mathrm{d} t}= \eta_{2}(-cu^2+(c(p-x)+ck-1)u+p)\ ,
\end{equation*}
where $-c<0$, $(c(p-x)+ck-1>0$, $p>0$, function in the right of the ODE must have two zero roots which we name as $u_{l}$ and $u_{r}$, and $u_{l}<0<u_{r}$(A large enough $c$ makes the left root $u_{l}$ close to zero). Then ODE above can be transformed into 
\begin{equation}
    \frac{\mathrm{d} u}{\mathrm{d} t}= -c\eta_{2}(u-u_{l})(u-u_{r})\ ,
\end{equation}
which has solution as $u=u_{r}+\frac{u_{r}-u_{l}}{me^{-c(u_{l}-u_{r})\eta_{2}t}-1}$. So ODE(14) converges to $u=u_{r}$ at exponential speed. Then ODE $\frac{\mathrm{d} u}{\mathrm{d} t} = \eta_{2}(p-u-cuv)$ also has exponential convergence by inequality (13). Same for the convergence of $v$.         $\hfill\blacksquare$
\end{proof}

\par Expression at equilibrium of $u$ and $v$ is actually our requirement for clock signal in one period. To alternate the values of $u$ and $v$, we must choose $p$ between high and low amplitudes of oscillator $x$. In Example 3.3, high amplitude of $x$ is between 4 and 3, while the low amplitude is between 0 and 1, so the range of $p$ is between 1 and 3. Fix the other parameters and choose $p=0.5$, $p=2.5$, $p=3.5$, it is obvious that when $p$ falls out of the range, oscillations of $u$ and $v$ occur in intersecting segments that are not both zero, which destroy the symmetry of $u$ and $v$ as Fig.7.
\begin{figure}
  \centering
  \includegraphics[width=\columnwidth]{./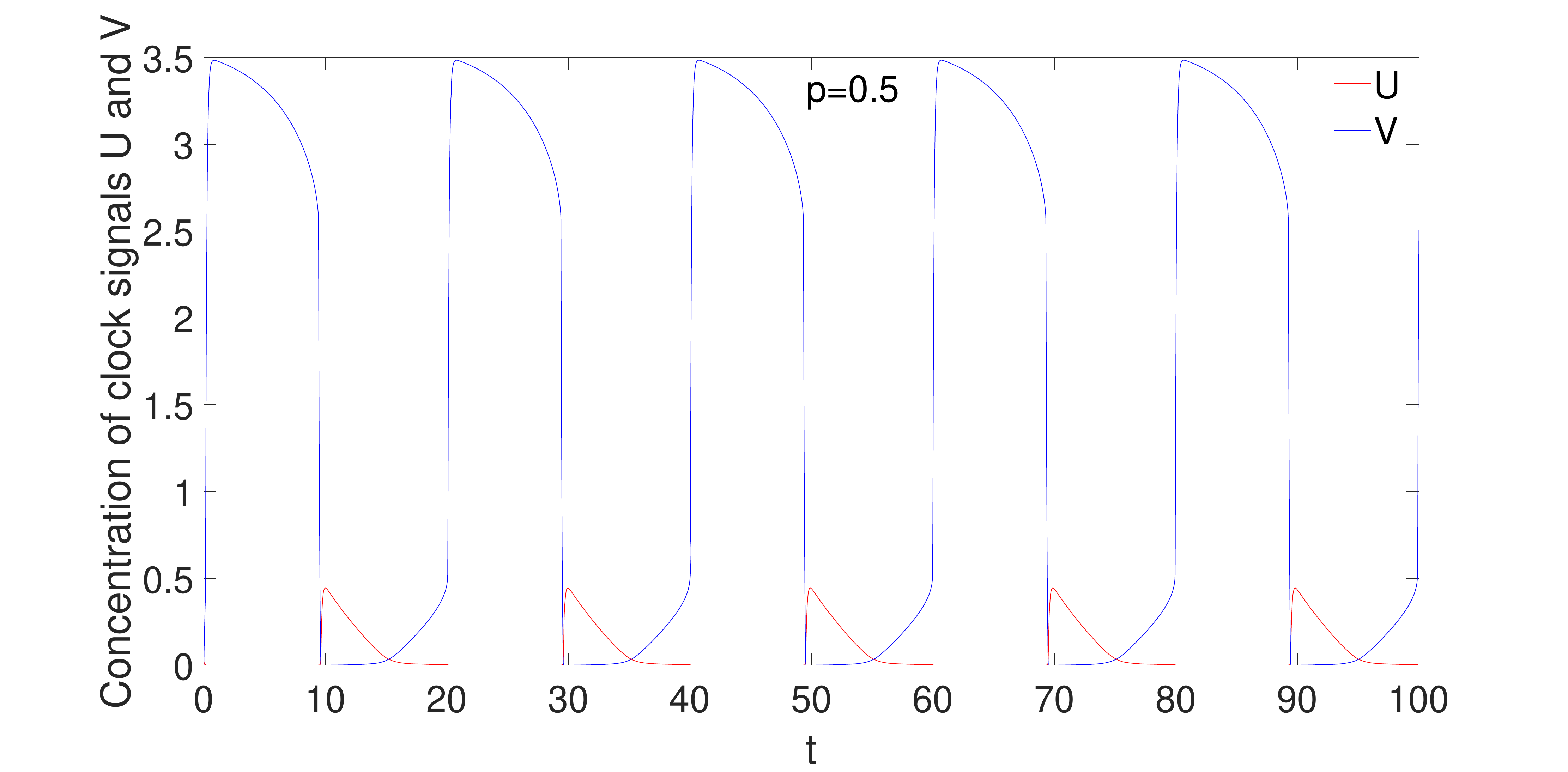}
  \hspace{2in} 
  \includegraphics[width=\columnwidth]{./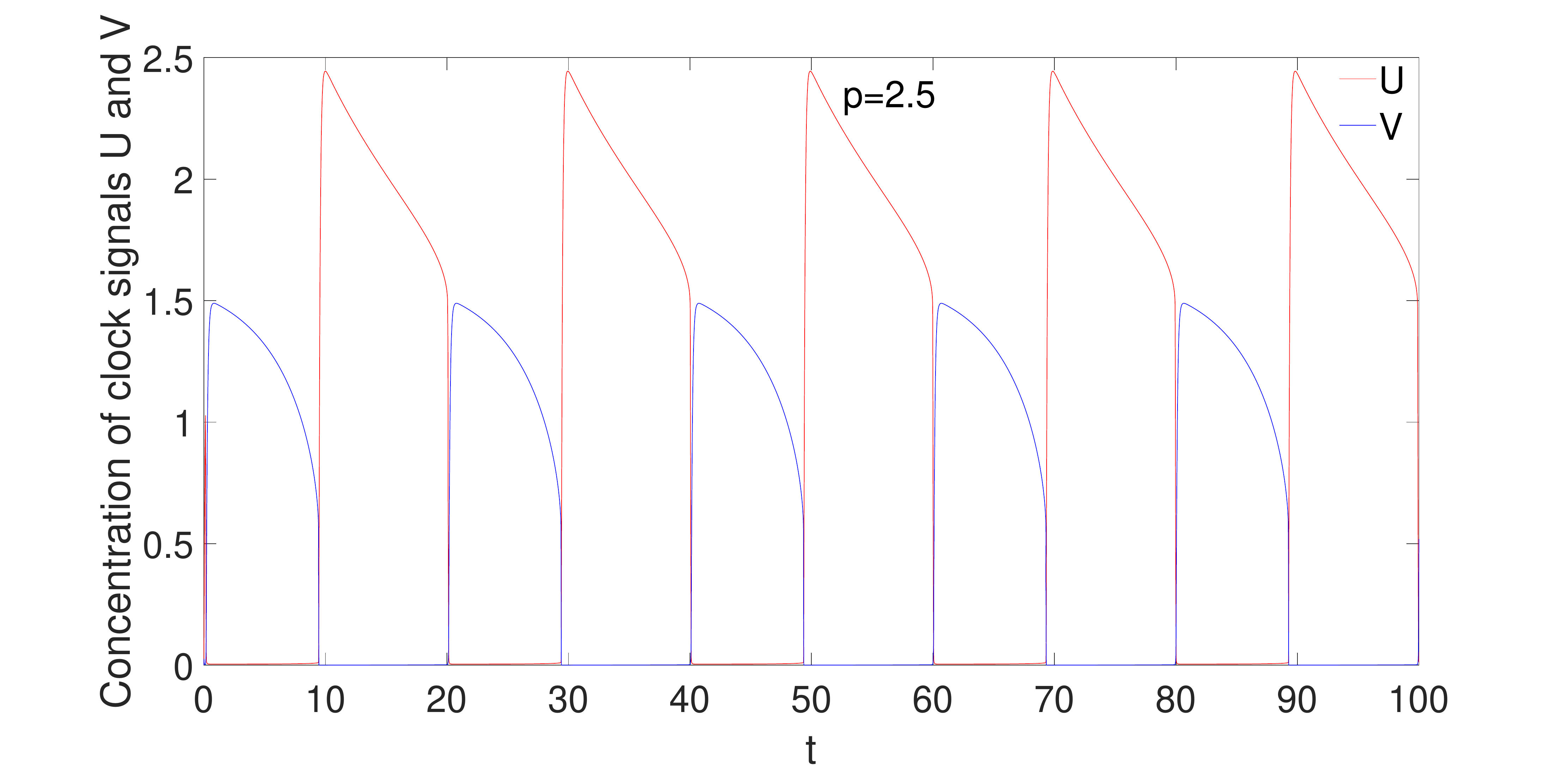}
  \hspace{2in} 
  \includegraphics[width=\columnwidth]{./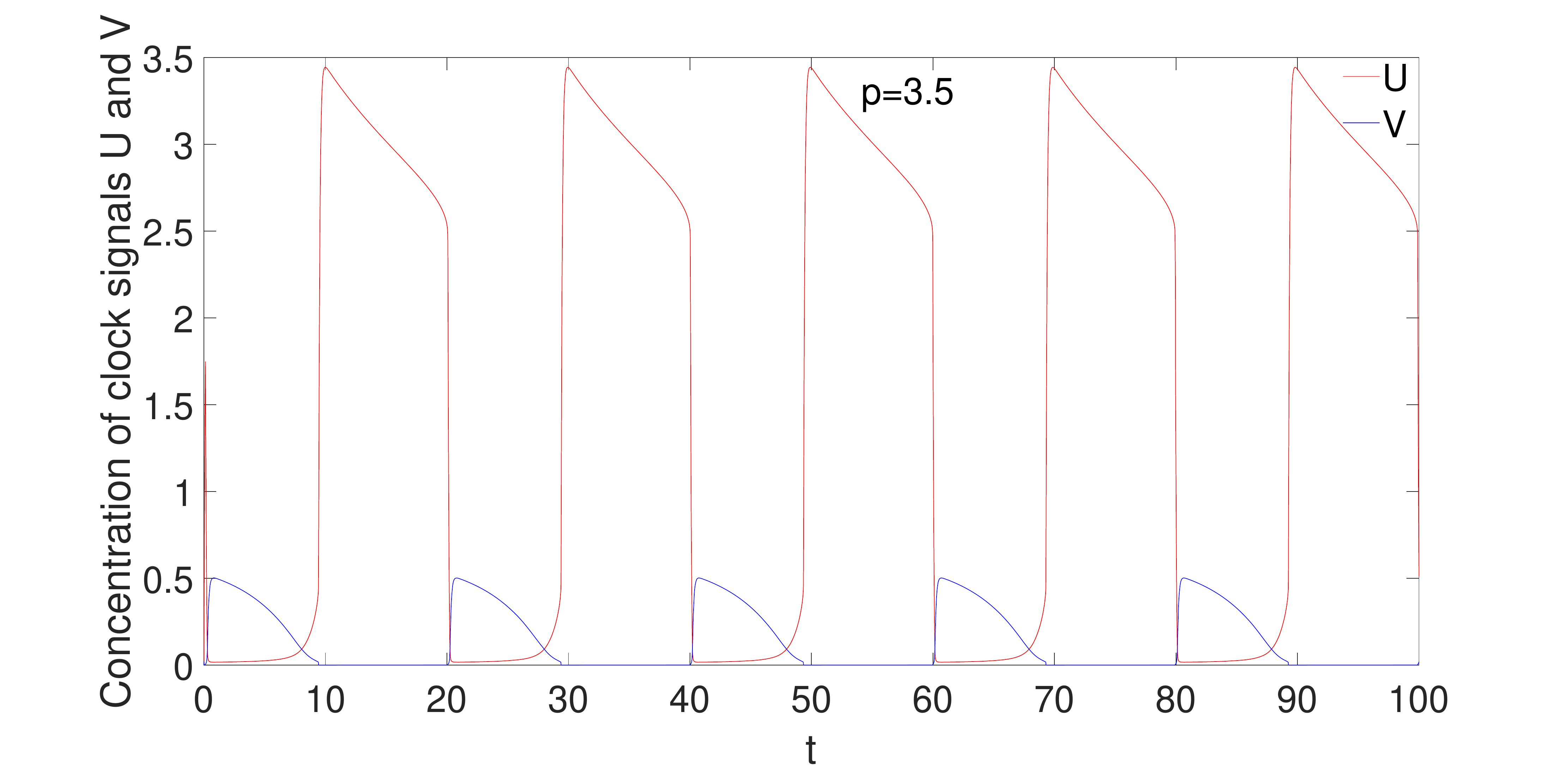}
  \caption{Simulation for $p=0.5$, $p=2.5$ and $p=3.5$.}
\end{figure}
\par Based on $p=2$, we also try different choices of parameter $c$ as 4 and 40. As we emphasize in Lemma 4.1, small value of $c$ would take $u$ and $v$ away from zero at their low amplitudes. For our symmetric clock signals, $c=400$ is enough.
\begin{figure}
  \centering
  \includegraphics[width=\columnwidth]{./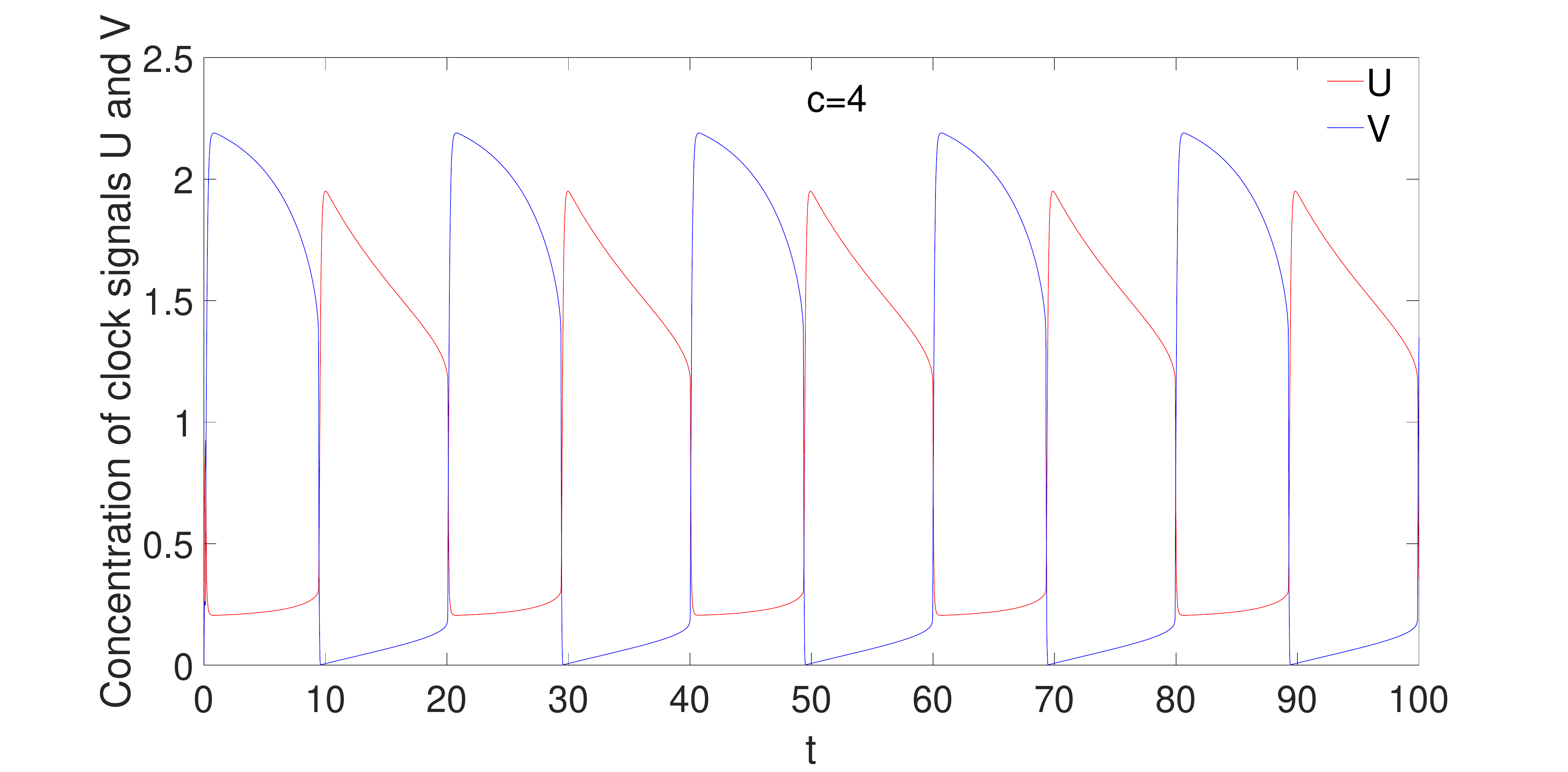}
  \hspace{2in} 
  \includegraphics[width=\columnwidth]{./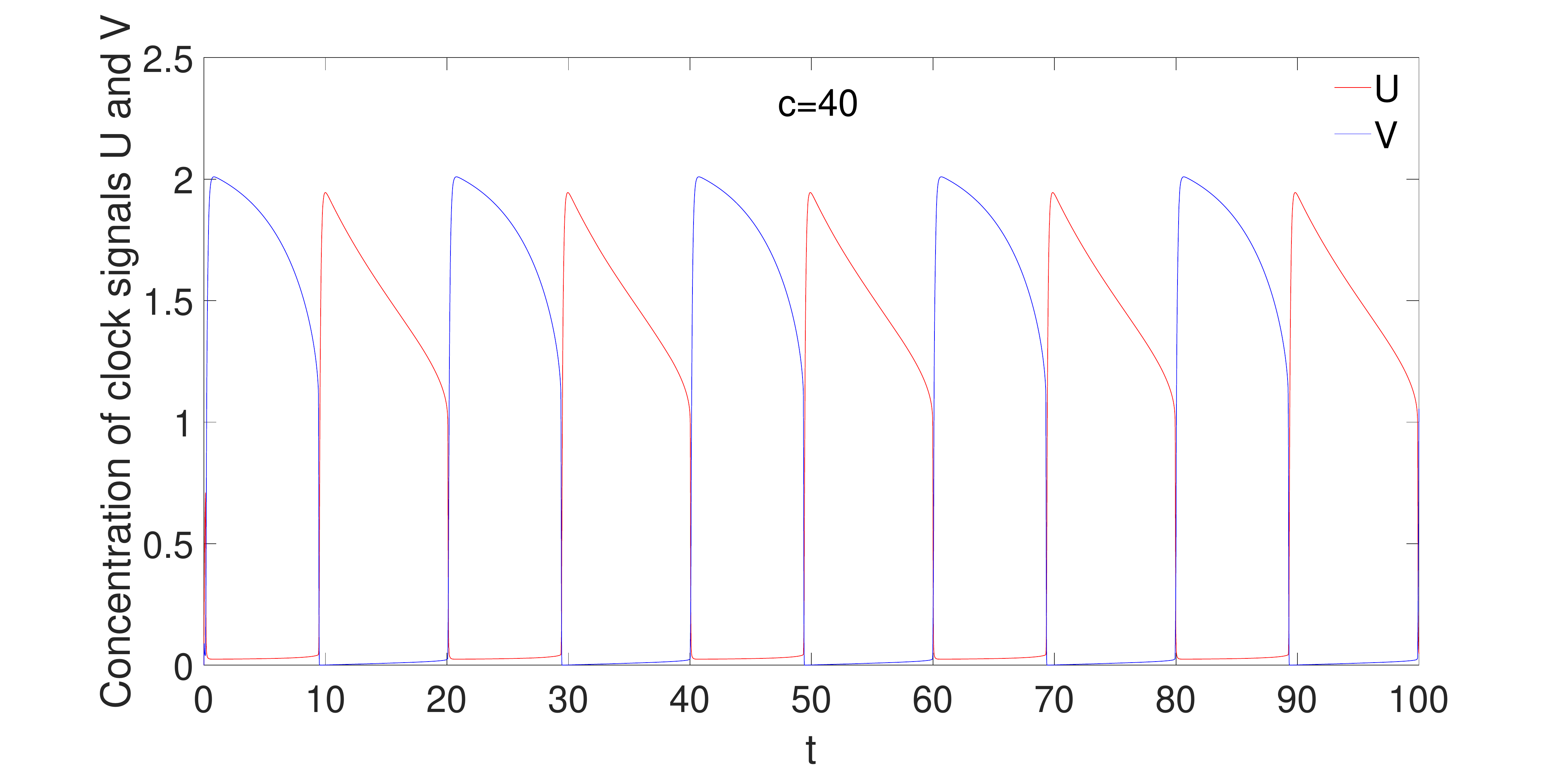}
  \caption{Simulation for $c=4$ and $c=40$.}
\end{figure}

\subsection{Parameters for coupling}
In system (9) we introduce $\eta_{1}$ and $\eta_{2}$ which we name 'parameters for coupling'. Role of these parameters is not only to adjust convergence speed and oscillation period of the isolated system, but also resulting in different timescales between these systems when coupling. 
\par Specifically, in system (9), parameter $\eta_{1}$ does not destroy structure of the critical manifold and equilibrium, so amplitude of oscillator $x$ is independent with $\eta_{1}$. However, $\eta_{1}$ affects the rate of change in $x$, which is closely related to oscillation period. We choose $\eta_{1}=0.01$ in Example 3.3 in order to magnify period of $x$ and control system $x-y$ on a slower timescale than system $u-v$. While $\eta_{2}$ adjusts period of the whole system in Example 3.3. We prefer $\eta_{2}$ to be large to speed up the convergence of $u$ and $v$, which ensures that $u$ and $v$ switch between high and low amplitudes in the form of phase mutations.
\par Till now, we conclude the choice of parameters and give a theorem that system (9) can act as a universal oscillator model to build a pair of symmetric clock signal $U$ and $V$ as we want.
\begin{theorem}
    For system (9), we choose function $f(x)$ and $x+\mu y+\lambda$ as what Theorem 3.1 states. Parameter $p$ is between the fold values of $f(x)$ i.e. $x_{m}<p<x_{M}$, c is large enough. $\eta_{1}$ is as small as possible while $\eta_{2}$ is as large as possible. Then for any initial point $(x_{0},y_{0},u_{0},v_{0})$ satisfying $x_{0}>0$, $y_{0}>0$, $u_{0}\geq 0$ and $v_{0}\geq 0$ except for the case that $(x_{0},y_{0})$ is the unique equilibrium of system $x-y$, oscillation of $u$ and $v$ would exhibit a certain symmetry i.e. corresponding species $U$ and $V$ are pair of symmetric clock signals as Definition 2.4 describes.
\end{theorem}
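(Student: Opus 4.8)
The plan is to exploit the one-way coupling built into system (9): the $x$-$y$ block is autonomous and does not depend on $u$ or $v$, so its behaviour is dictated entirely by Theorem 3.1. First I would invoke Theorem 3.1 to conclude that, for every admissible initial pair $(x_0,y_0)$ in the open first quadrant other than the equilibrium $E$, the trajectory $(x(t),y(t))$ converges to the relaxation limit cycle $\Gamma_{\epsilon}$. Consequently $x(t)$ is asymptotically periodic and, for small $\epsilon$, its waveform is nearly a square wave: $x$ dwells on the slow attracting branch $S_{l}$ (where $x<x_{m}$), jumps rapidly to $S_{r}$ (where $x>x_{M}$), dwells there, and jumps back. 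Since the hypothesis places $p$ strictly between the fold values, $x_{m}<p<x_{M}$, we have $x<x_{m}<p$ throughout the low phase and $x>x_{M}>p$ throughout the high phase; in particular $|x-p|$ is bounded away from zero on each branch, which is exactly the nondegeneracy that Lemma 4.2 requires.

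Next I would treat the $u$-$v$ block as a fast system driven by the slowly varying input $x(t)$. The key quantitative point is the time-scale separation produced by the coupling parameters: the $u$-$v$ block evolves at rate $\eta_{2}$, whereas the slow drift of $x$ along a branch evolves at rate $\eta_{1}\eta_{2}$, so the ratio of dwell time to relaxation time is of order $1/\eta_{1}$, which is large when $\eta_{1}$ is chosen small. Freezing $x$ at its current branch value, Lemma 4.2 gives exponential convergence of $(u,v)$ to the approximate equilibrium of Lemma 4.1, namely $(p-x,0)$ when $x<p$ and $(0,x-p)$ when $x>p$. I would then compare the dwell time of $x$ on each branch, of order $1/(\eta_{1}\eta_{2})$ times the $O(1)$ slow-flow traversal time, against the relaxation time of the $u$-$v$ block, of order $1/\eta_{2}$ divided by the exponential rate of Lemma 4.2, and show that the former dominates. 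This guarantees that within each phase $(u,v)$ reaches an arbitrarily small neighbourhood of its quasi-steady state well before $x$ switches branches.

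Assembling the two pieces yields the conclusion. During the low phase $u$ sits near $p-x>0$ while $v$ is pinned near $0$ (to within the $O(1/c)$ residual of the large-$c$ approximation together with the exponential tail), and during the high phase the roles reverse. Thus both $u$ and $v$ oscillate, each spending part of the period near zero and the complementary part strictly positive, and they do so in antiphase --- precisely the requirement of Definitions 2.3 and 2.4 for a pair of symmetric clock signals. I would also remark that the nonnegative quadrant in $(u,v)$ is positively invariant, since $\dot{u}=\eta_{2}p>0$ on $\{u=0\}$ and $\dot{v}=\eta_{2}x\geq 0$ on $\{v=0\}$, so the constructed signals remain physical concentrations.

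The hard part will be the quantitative time-scale estimate in the middle step. Lemma 4.2 controls convergence only for a \emph{constant} input $x$, whereas here $x$ varies --- slowly along the branches but abruptly during the jumps. Making the quasi-steady-state tracking rigorous requires either a singular-perturbation (Fenichel-type) argument for the combined slow-fast structure, or an explicit Gronwall-style bound showing that the error between $(u,v)$ and the frozen-$x$ equilibrium stays small provided $\eta_{1}$ is sufficiently small relative to the branch dwell times. Controlling the transient generated by each fast jump of $x$, and ensuring that the ``close enough to zero'' tolerance of Definition 2.3 absorbs both the $O(1/c)$ equilibrium error and the exponential tail, is where the real care is needed.
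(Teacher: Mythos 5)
Your proposal is correct and follows essentially the same route as the paper's own proof: invoke Theorem 3.1 for the relaxation oscillation of the autonomous $x$-$y$ block, Lemma 4.1 for the frozen-$x$ equilibria $(p-x,0)$ or $(0,x-p)$, and the exponential-convergence lemma together with small $\eta_{1}$ and large $\eta_{2}$ to argue that $(u,v)$ tracks the appropriate equilibrium on each branch, yielding antiphase clock signals. In fact your version is more careful than the paper's, which simply asserts the quasi-steady tracking for time-varying $x$; the gap you flag at the end --- that the exponential-convergence lemma is proved only for constant input and needs a Gronwall or Fenichel-type argument to cover the slowly drifting and jumping $x(t)$ --- is present, unaddressed, in the paper's proof as well.
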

\begin{proof}
    As we emphasize in Theorem 3.1 and Example 3.2, isolated system $x-y$ can result in relaxation oscillation which is independent of initial point in the first quadrant of phase plane except for the equilibrium. The equilibrium of isolated system $u-v$ is shown in Lemma 4.1 that when $u=0$, $v>0$, and vice versa. Exponential convergence of the isolated system $u-v$ given by Theorem 4.1, along with small enough $\eta_{1}$ and large enough $\eta_{2}$ makes sure that $(u,v)$ can converge quite quickly to corresponding equilibrium as the value of $x$ changes. So transform system (9) back into chemical reaction networks, species $U$ and $V$ are symmetric clock signals.   $\hfill\blacksquare$
\end{proof}

Moreover, parameter choice in Theorem 4.2 almost erases the response time of $u$ and $v$ with respect to the change in $x$, so we utilize the period of $x$ to roughly estimate the period of $u$ and $v$. Imitate approach in \cite{fernandez2020symmetric}, we give a formula for calculating the period of $x$ at high amplitude and low amplitude in system (9) as Theorem 4.2.
\begin{theorem}
    Consider the relaxation oscillation orbit $\Gamma_{\epsilon}$ in system $x-y$ of system (9), the period of $x$ i.e. time it takes to travel around the closed orbit $\Gamma_{\epsilon}$ can be approximated at the first order in $\epsilon$ by $T_{1}+T_{2}+O(1)$ with
    \begin{equation}
        T_{1}= \int_{x_{A}}^{x_{m}}\frac{(f'(x)-\frac{\mathrm{d} \psi _{1}}{\mathrm{d} x}(x,\epsilon ))dx}{\eta_{1}\eta_{2}(x+\mu(f(x)-\psi _{1}(x,\epsilon ))+\lambda)(f(x)-\psi _{1}(x,\epsilon ))}\ ,
    \end{equation}
    \begin{equation}
       T_{2}= \int_{x_{C}}^{x_{M}}\frac{(f'(x)+\frac{\mathrm{d} \psi _{2}}{\mathrm{d} x}(x,\epsilon ))dx}{\eta_{1}\eta_{2}(x+\mu(f(x)+\psi _{2}(x,\epsilon ))+\lambda)(f(x)+\psi _{2}(x,\epsilon ))}\ ,
    \end{equation}
    where $\psi_{1}$ and $\psi_{2}$ are differentiable function defined separately on $(x_{A},x_{m}) \times (0,\epsilon_{0})$ and $(x_{C},x_{M}) \times (0,\epsilon_{0})$, and $\exists \xi (\epsilon) =O(\epsilon ^{2/3})$, such that 
    \begin{align}
         &\forall x \in (x_{A},x_{m}),  \left | \psi _{1}(x,\epsilon ) \right |< \xi (\epsilon )\ ,\\
&\forall x \in (x_{M},x_{C}),  \left | \psi _{2}(x,\epsilon ) \right |< \xi (\epsilon )\ ,
    \end{align}
     $\epsilon_{0}$ is small enough. 
\end{theorem}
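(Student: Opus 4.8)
The plan is to split the closed orbit $\Gamma_{\epsilon}$ into its two slow arcs, lying along the perturbed attracting branches of the critical manifold, and its two fast near-horizontal jumps, and then to show that the slow arcs contribute exactly $T_{1}$ and $T_{2}$ while the jumps and the fold passages account only for the bounded remainder. Throughout I treat the $x$--$y$ subsystem of (9) as a slow--fast system with $x$ fast (because of the factor $1/\epsilon$) and $y$ slow; the critical manifold is $y=f(x)$, and by Hypothesis 3.1 the branches $S_{l}$ and $S_{r}$ are normally hyperbolic and attracting.

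First I would invoke Fenichel's theorem \cite{fenichel1979geometric} to obtain invariant slow manifolds $S_{l,\epsilon}$ and $S_{r,\epsilon}$ that are $O(\epsilon)$-close to $S_{l}$ and $S_{r}$ on compact subsets bounded away from the fold points. Writing these manifolds as graphs $y=f(x)-\psi_{1}(x,\epsilon)$ over $(x_{A},x_{m})$ and $y=f(x)+\psi_{2}(x,\epsilon)$ over $(x_{M},x_{C})$ defines the correction functions $\psi_{1},\psi_{2}$ and makes them differentiable in $x$. Because each slow manifold is one-dimensional, the flow restricted to it reduces to a scalar equation for $x$: differentiating $y=f(x)-\psi_{1}$ along the trajectory and substituting into $\dot y=\eta_{1}\eta_{2}(x+\mu y+\lambda)y$ gives
\[
\Bigl(f'(x)-\tfrac{d\psi_{1}}{dx}\Bigr)\dot x=\eta_{1}\eta_{2}\bigl(x+\mu(f(x)-\psi_{1})+\lambda\bigr)\bigl(f(x)-\psi_{1}\bigr),
\]
so that $dt=dx/\dot x$ is precisely the integrand of $T_{1}$; integrating from $x_{A}$ to $x_{m}$ produces $T_{1}$, and the identical computation on $S_{r,\epsilon}$ (now with $+\psi_{2}$) from $x_{C}$ to $x_{M}$ produces $T_{2}$. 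These integrals are well defined because, by Lemma 3.2, the only equilibrium sits on $S_{m}$, so $g=x+\mu y+\lambda$ does not vanish on the open attracting branches, and $y>0$ in the first quadrant keeps the denominators away from zero.

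The hard part will be the behaviour near the two fold points $x_{m}$ and $x_{M}$, where $f'\to0$, the Fenichel estimate degenerates, and the orbit leaves the critical manifold to jump. Here I would import the local fold (blow-up / Airy) analysis of Krupa and Szmolyan \cite{krupa2001relaxation} in the form already adapted to period computations in \cite{fernandez2020symmetric}: rescaling in the fold coordinates shows that the slow manifold stays at distance $O(\epsilon^{2/3})$ from $S$, which is exactly the uniform bound $|\psi_{i}(x,\epsilon)|<\xi(\epsilon)=O(\epsilon^{2/3})$ claimed in the statement and which legitimizes extending the graph parametrization all the way up to the folds. Establishing this bound uniformly in $x$ as the fold is approached, rather than only on compact interior subsets, is the genuinely delicate step.

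Finally I would account for the complementary pieces of the orbit. Along the fast jumps $\epsilon\dot x$ is $O(1)$ while $x$ sweeps an $O(1)$ interval, and the two fold neighbourhoods through which the orbit transits have $x$-width $O(\epsilon^{1/3})$; the time accumulated on these pieces, together with the $O(\epsilon^{2/3})$ shifts of the turning points $x_{A},x_{C}$ induced by the corrections, remains uniformly bounded as $\epsilon\to0$. Collecting these into a single remainder yields period $=T_{1}+T_{2}+O(1)$, which is the assertion. I would lean on \cite{fernandez2020symmetric} for the precise order of this remainder, since the structural content here is the explicit form of $T_{1}$ and $T_{2}$ together with the $\xi(\epsilon)=O(\epsilon^{2/3})$ control of the slow-manifold corrections.
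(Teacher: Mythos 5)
Your proposal follows essentially the same route as the paper's proof: parametrize the slow portions of $\Gamma_{\epsilon}$ as graphs $y=f(x)\mp\psi_{i}(x,\epsilon)$ over $(x_{A},x_{m})$ and $(x_{M},x_{C})$, convert travel time into an integral via $dt=\frac{dy/dx}{dy/dt}\,dx$, and absorb the fast jumps into the $O(1)$ remainder. If anything you are more careful than the paper, which attributes the $O(\epsilon^{2/3})$ closeness directly to the Fenichel Slow Manifold Theorem, whereas you correctly observe that the $2/3$ exponent comes from the Krupa--Szmolyan fold (blow-up) analysis rather than the standard $O(\epsilon)$ Fenichel estimate valid only on compacts away from the folds, and you also verify that the denominators in $T_{1}$, $T_{2}$ stay bounded away from zero.
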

\begin{proof}
We first confirm the formula for $T_{1}$. Lemma 3.1 declares the existence of closed orbit $\Gamma_{\epsilon}$ which is actually the trajectory of relaxation oscillation, and $\Gamma_{\epsilon}$ lies in the neighborhood of $O(\epsilon ^{2/3})$ of $\Gamma$ by Fenichel Slow Manifold Theorem. Trajectory $\Gamma$ in the non-horizontal segment is depicted by critical manifold i.e. $y=f(x)$ in phase plane. Segment of $\Gamma_{\epsilon}$ closed to ${(x,f(x)):x_{A}<x<x_{m}}$ is defined as $\Gamma_{\epsilon,l}$:
\begin{equation}
    \Gamma_{\epsilon,l}: y= \chi ^{-}(x,\epsilon )\ .
\end{equation}
So for small enough $\epsilon_{0}$, $\forall \epsilon \in (0,\epsilon_{0})$, $\exists$
\begin{equation}
    \psi_{1}: (x,\epsilon) \to f(x)-\chi ^{-}(x,\epsilon )\ .
\end{equation}
It is obvious that $\psi_{1}>0$ and is differentiable w.r.t $x$, and $\left | \psi _{1}(x,\epsilon ) \right |<O(\epsilon ^{2/3})$ for $\forall x \in (x_{A},x_{m})$. Then we can substitute $y=f(x)-\psi_{1}$ into $\frac{\mathrm{d} y}{\mathrm{d} t}$ in system (9) and get:
\begin{equation}\begin{aligned}
    \frac{\mathrm{d} y}{\mathrm{d} x}&= f'(x)-\frac{\mathrm{d} \psi _{1}}{\mathrm{d} x}\ , \\
\frac{\mathrm{d} y}{\mathrm{d} t}&=\eta_{1}\eta_{2}(x+\mu(f(x)-\psi _{1}(x,\epsilon ))+\lambda)(f(x)-\psi _{1}(x,\epsilon ))\ .
\end{aligned}\end{equation}
The time it takes to travel along $\Gamma_{\epsilon,l}$ is given by
\begin{equation}\begin{aligned}
    &T_{1}=\int _{\Gamma_{\epsilon ,l}}dt=\int_{x_{A}}^{x_{m}}\frac{1}{\frac{\mathrm{d} x}{\mathrm{d} t}}dx=\int_{x_{A}}^{x_{m}}\frac{\frac{\mathrm{d} y}{\mathrm{d} x}}{\frac{\mathrm{d} y}{\mathrm{d} t}}dx \\
    &=\int_{x_{A}}^{x_{m}}\frac{(f'(x)-\frac{\mathrm{d} \psi _{1}}{\mathrm{d} x}(x,\epsilon ))dx}{\eta_{1}\eta_{2}(x+\mu(f(x)-\psi _{1}(x,\epsilon ))+\lambda)(f(x)-\psi _{1}(x,\epsilon ))}\ .
\end{aligned}\end{equation}
Similar for $T_{2}$ i.e. time it takes to travel along $\Gamma_{\epsilon,r}$. Since $\Gamma_{\epsilon}$ in the horizontal segment corresponds to the phase mutation of $x$ between high and low amplitudes, a whole period of $x$ can be approximated as $T_{1}+T_{2}+O(1)$.                        $\hfill\blacksquare$
\end{proof}
Apply Theorem 4.2 to our Example 3.3, we get estimated period of high amplitude and low amplitude of $x$ with parameters stated before as
\begin{align*}
    T_{1}=&\int_{0}^{1}\frac{10(-3x^{2}+12x-9)}{(x-2)(-x^{3}+6x^{2}-9x+5)}dx \approx 10.47\ ,\\
    T_{2}=&\int_{4}^{3}\frac{10(-3x^{2}+12x-9)}{(x-2)(-x^{3}+6x^{2}-9x+5)}dx \approx 9.19\ .
\end{align*}
As claimed before, we directly utilize $T_{1}$ and $T_{2}$ to estimate the period of $u$ and $v$ at high or low amplitude, which is roughly consistent with the simulation result in Fig.5.
~\\
\par While in system (11), we actually use our clock signals $U$ and $V$ to realize Example 2.3 and insert a new parameter $\eta_{3}$ into the coupled ODEs. The role of $\eta_{3}$ is to coordinate the timescale between the system (10) generating the clock signals and the reaction modules to be adjusted. This is necessary because there is usually a difference between the period of the clock signal i.e. time given for reaction module to converge to equilibrium and the actual time required for reaction module to converge to equilibrium. We choose $\eta_{3}=0.35$ in this example. While we apply the universal oscillator model (9) to other reaction modules to be ordered, value of $\eta_{3}$ is depend on situation.

\section{Spontaneous termination of loop}
Plenty of studies on chemical oscillators, including previous sections of this paper, mainly focus on implementing the sequential operation of reaction modules, while how to make the system spontaneously terminate the alternate operation of two reaction modules according to certain judgment conditions is also a thought-provoking problem. As we known, oscillation can not end spontaneously, otherwise it would not be an oscillation. So we need to set up additional species to interfere with the loop of reaction modules.
\par Refer to the computer for setting instructions to jump out of a loop, there are two main methods: 
\begin{enumerate}
    \item One or more variables reaches a specific value;
    \item The loop operates for a preset number of times.
\end{enumerate}
The form asks for design according to specific situation, while the latter can be treated in a general way. Come back to our Timer Model as Example 2.3, each time the two modified reaction modules loop, concentration of species $X_{1}$ goes up by one(see Fig.6), and concentration of $X_{1}$ does not change until the modified reaction module 2 operates again. So the concentration of species $X_{1}$ i.e. value of $x_{1}$ at any given moment corresponds to the number of loops of these two reaction modules, and by slowing down the frequency of the clock signal $U$ and $V$, we can use the oscillation period of $u$ and $v$ to refer to the time for one loop of the two reaction modules to be adjusted. When we face two new reaction modules, we add clock signal $U$ and $V$ separately into these modules as catalyst and monitor the number of loops by value of $x_{1}$. For example, when $x_{1}$ stabilizes at 100 and is about to jump to 101, the two new reaction modules loop for exactly 100 times.
\par Next problem is how can the whole system make a spontaneous decision whether to end the loop based on value of $x_{1}$. Our thought also comes from adding catalysts to turn the reaction module on or off. Assume that we need the new reaction modules to loop for $n$ times, then we can build an additional species $X_{3}$ which acts as another catalyst of both the two new reaction modules. We just need concentration of $X_{3}$ to go to zero when times of loop i.e. value of $x_{1}$ increases beyond the preset number $n$, and this can be realized by a truncated subtraction module as follows:
\begin{equation}\begin{aligned}
   N+X_{3} &\to N+2X_{3}\ , \\
   X_{1}+X_{3} &\to X_{1}\ , \\
   2X_{3} &\to X_{3}\ ,
\end{aligned}\end{equation}
whose ODE expressed as:
\begin{equation}
    \frac{\mathrm{d} x_{3}}{\mathrm{d} t}=(n-x_{1}-x_{3})x_{3}\ .
\end{equation}
\par We give up the Sub module mentioned in \cite{vasic2020} for that the module produces an extra useless species $H$ and exponential convergence of it has not been proven. While in our module (23), exponential convergence is clear:
\begin{equation}
    x_{3}= \frac{n-x_{1}}{1+ke^{-(n-x_{1})t}}, k \in \mathbb{R}\ .
\end{equation}
So value of $x_{3}$ converges to equilibrium at exponential speed and the equilibrium is:
\begin{equation}
x_{3} = \begin{cases}
n-x_{1}, & \text{ if $n > x_{1}$}  \\
0, & \text{ otherwise } 
\end{cases}
\end{equation}
Note that if the initial value of $x_{3}$ is zero, then it will stay at zero forever. So when we use this module, we can just choose initial value of $x_{3}$ as $n$ along with initial value of $x_{1}$ equal to zero. Before $x_{1}$ goes beyond $n$, value of $x_{3}$ will converge to $n-x_{1}$ during each loop, and catalyst $X_{3}$ always keeps the loop going. When times of loop exceeds the preset number $n$, $x_{3}$ converges quickly to zero, turning both of the two reaction modules off. Couple the ODE (24) with ODEs (11) and keep selection of parameters and initial point unchanged, we get the simulation of $x_{3}$ compared with $x_{1}$ in Fig.9. We also provide a $x_{4}$ as subtraction between the same $n$ and $x_{2}$.
\begin{figure}
  \centering
  \includegraphics[width=\columnwidth]{./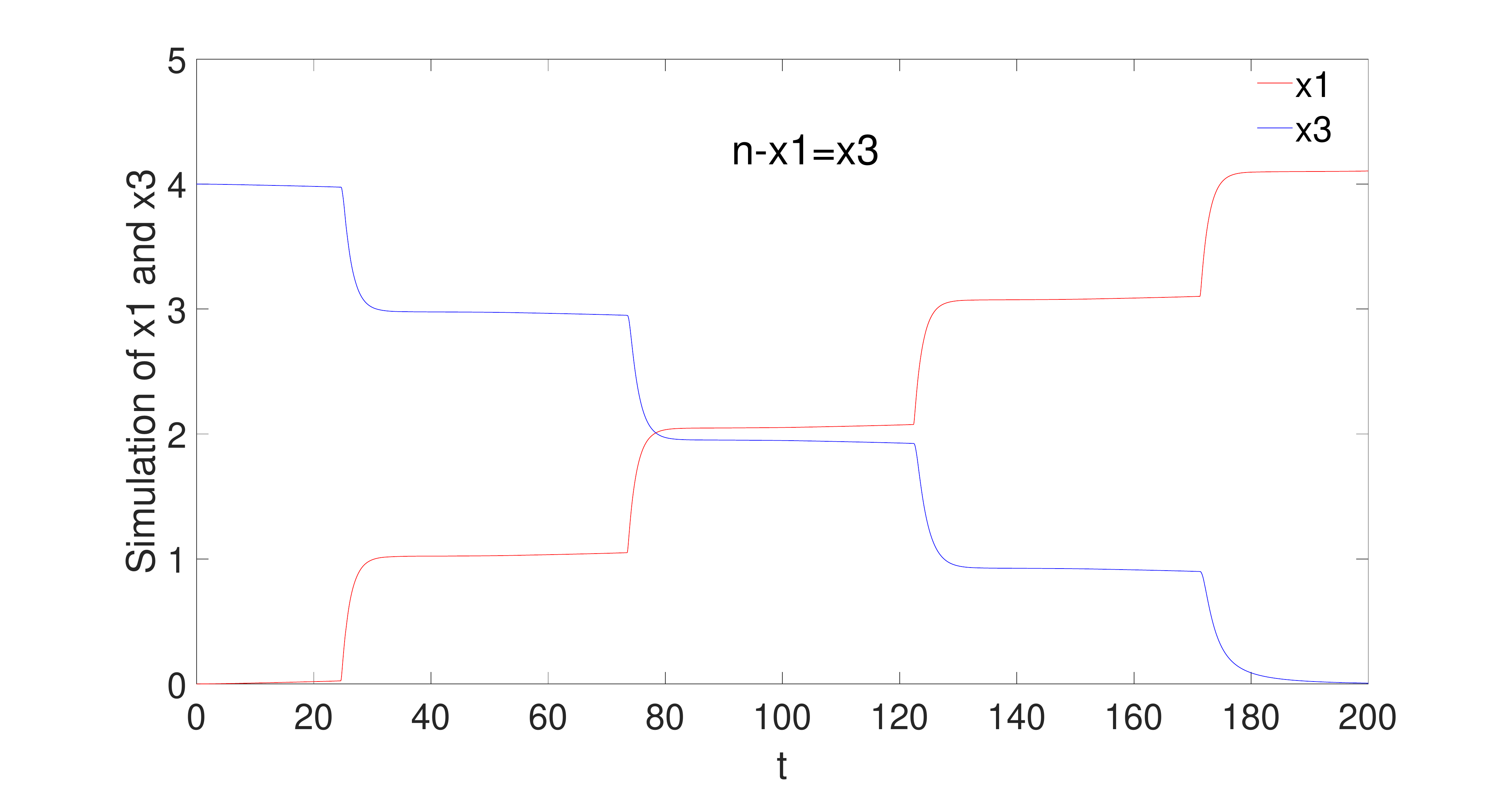}
  \hspace{2in} 
  \includegraphics[width=\columnwidth]{./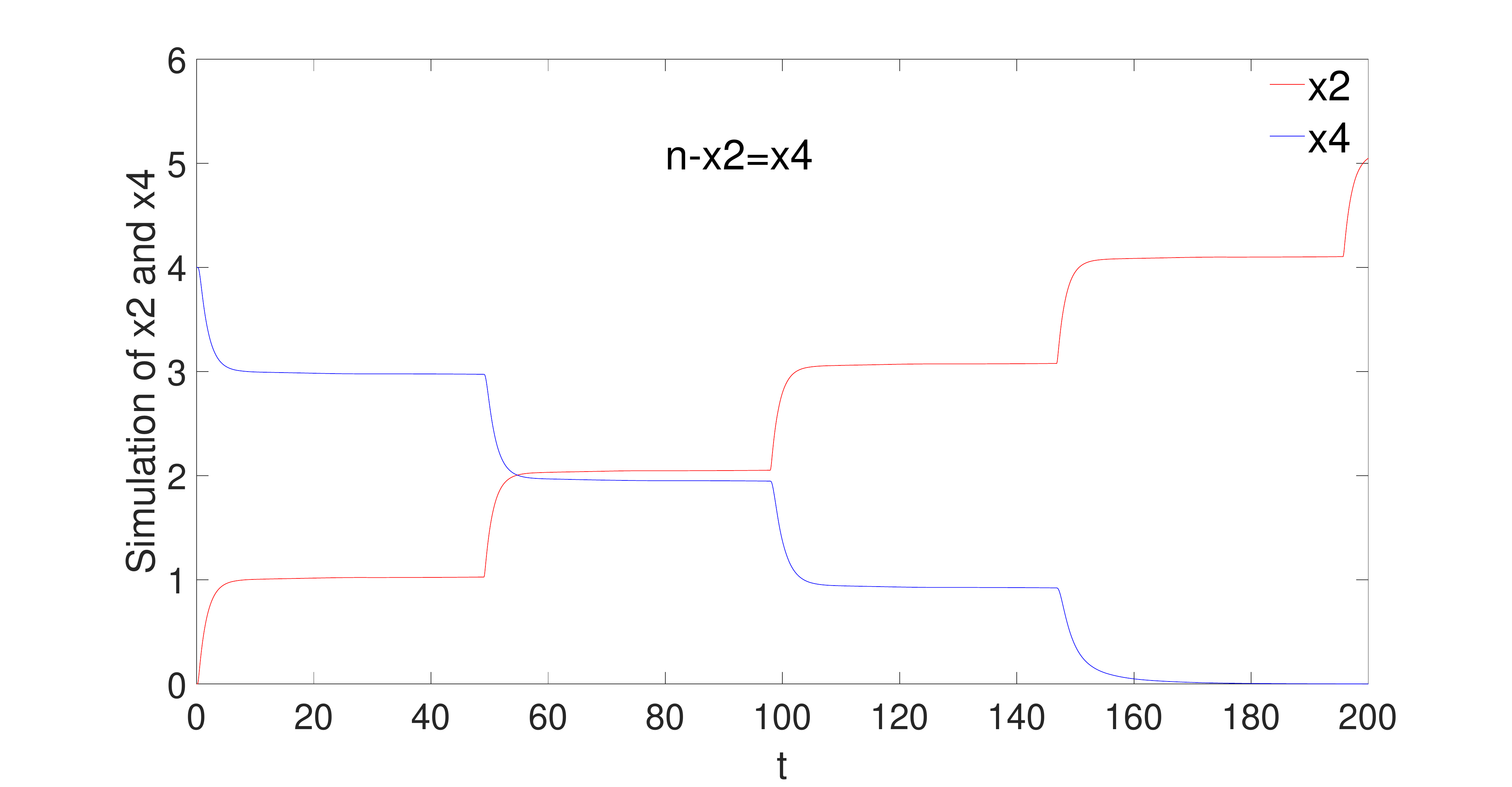}
  \caption{Simulation results of difference operation under dynamic input, with n=4. }
\end{figure}
We can easily conclude from the simulation diagram that:
\begin{enumerate}
    \item There is also a phase difference between $x_{3}$ and $x_{4}$, which is directly resulted by the phase difference between $x_{1}$ and $x_{2}$.
    \item The smoothness of $x_{3}$ in the descending section is almost equal to the smoothness of $x_{1}$ in the ascending section, and the former is affected by both the convergence speed of modified reaction module 2 in Example 2.3 and the convergence speed of our truncated subtraction module (23). So as $x_{4}$.
    \item The horizontal segment of $x_{3}$ has a slight downward trend, which implies that the horizontal segment of $x_{1}$ is not exactly horizontal. 
\end{enumerate}
\par Just from the aim of constructing the Counter Model i.e. building a component whose value increases by one every once in a while, the selection we have given in previous section is enough within acceptable limits of error. When it comes to design of spontaneous termination of loop based on this model, these errors, which could otherwise be ignored, lead to undesirable results. Therefore, in this section, we will consider stricter parameter values. 
\par In our Counter Model, values of both $x_{1}$ and $x_{2}$ increase periodically over time only with a phase difference. While the increases of $x_{1}$ and $x_{2}$ calibrate different stages within a single loop: value of component $v$ is firstly positive, leading to the increase of $x_{2}$, then $x$ oscillates at low amplitude and value of $u$ goes strictly beyond zero, resulting in the increase of $x_{1}$. When the roles of $u$ and $v$ are reversed again, one loop is finished. Thus, increase of $x_{2}$ happens in beginning of every loop, while increase of $x_{1}$ appears in the second half. We'll terminate the loop with components $x_{3}$ and $x_{4}$ by subtracting $x_{1}$ and $x_{2}$ with respect to preset loop times $n$ respectively, and demonstrate their effects.

\par Following example is given by inserting the termination component $x_{3}$ back to Counter Model as catalyst to compare the result with Counter model without termination operation.
\begin{example}
 \begin{equation}\label{Example 5.1}
    \begin{aligned}
    \frac{\mathrm{d} x}{\mathrm{d} t} &= \eta_{1}\eta_{2}(-x^3+6x^2-9x+5-y)x/\epsilon\ , \\
    \frac{\mathrm{d} y}{\mathrm{d} t} &= \eta_{1}\eta_{2}(x-\rho)y\ , \\
    \frac{\mathrm{d} u}{\mathrm{d} t} &= \eta_{2}(p-u-cuv)\ , \\
    \frac{\mathrm{d} v}{\mathrm{d} t} &= \eta_{2}(x-v-cuv)\ , \\
    \frac{\mathrm{d} x_{1}}{\mathrm{d} t} &= \eta_{3}(x_{2}-x_{1})u\ , \\
    \frac{\mathrm{d} x_{2}}{\mathrm{d} t} &= \eta_{3}(x_{1}+1-x_{2})v\ , \\
    \frac{\mathrm{d} x_{3}}{\mathrm{d} t} &= \eta_{4}(n-x_{1}-x_{3})x_{3}\ , \\
    \frac{\mathrm{d} x_{1}'}{\mathrm{d} t} &= \eta_{5}(x_{2}'-x_{1}')ux_{3}\ , \\
    \frac{\mathrm{d} x_{2}'}{\mathrm{d} t} &= \eta_{5}(x_{1}'+1-x_{2}')vx_{3}\ . \\
    \end{aligned}
\end{equation}
We use $x_{1}'$ and $x_{2}'$ as comparison of $x_{1}$ and $x_{2}$ under termination component $x_{3}$. In order to reduce the error caused by the failure to completely turn the module off at the corresponding time because $u$ and $v$ do not reach zero at their respective low amplitudes, we increase the value of parameter $c$ to 5000. And take $\eta_{4}$ as large as possible to speed up response of $x_{3}$ towards changes of $x_{1}$. Let $\eta_{3}$ equal to $\eta_{5}$ for fairness of comparison. In this example, we choose $\epsilon=0.001$, $\rho=2$, $p=2$, $c=5000$, $n=4$, $\eta_{1}=0.01$, $\eta_{2}=10$, $\eta_{3}=\eta_{5}=1$, $\eta_{4}=500$, and get the simulation in Fig.10.  
\end{example}
\begin{figure}
  \centering
  \includegraphics[width=\columnwidth]{./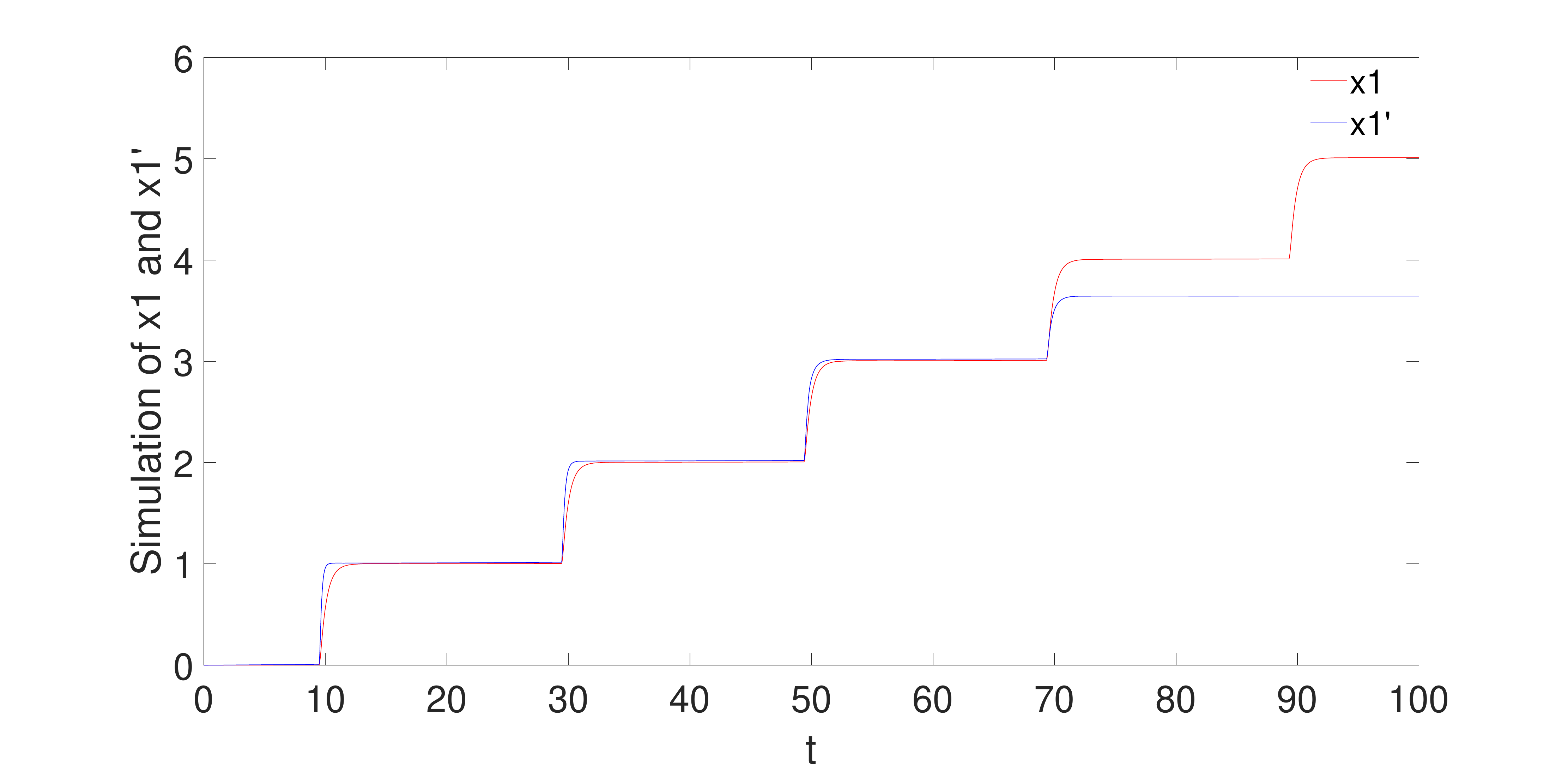}
  \hspace{2in} 
  \includegraphics[width=\columnwidth]{./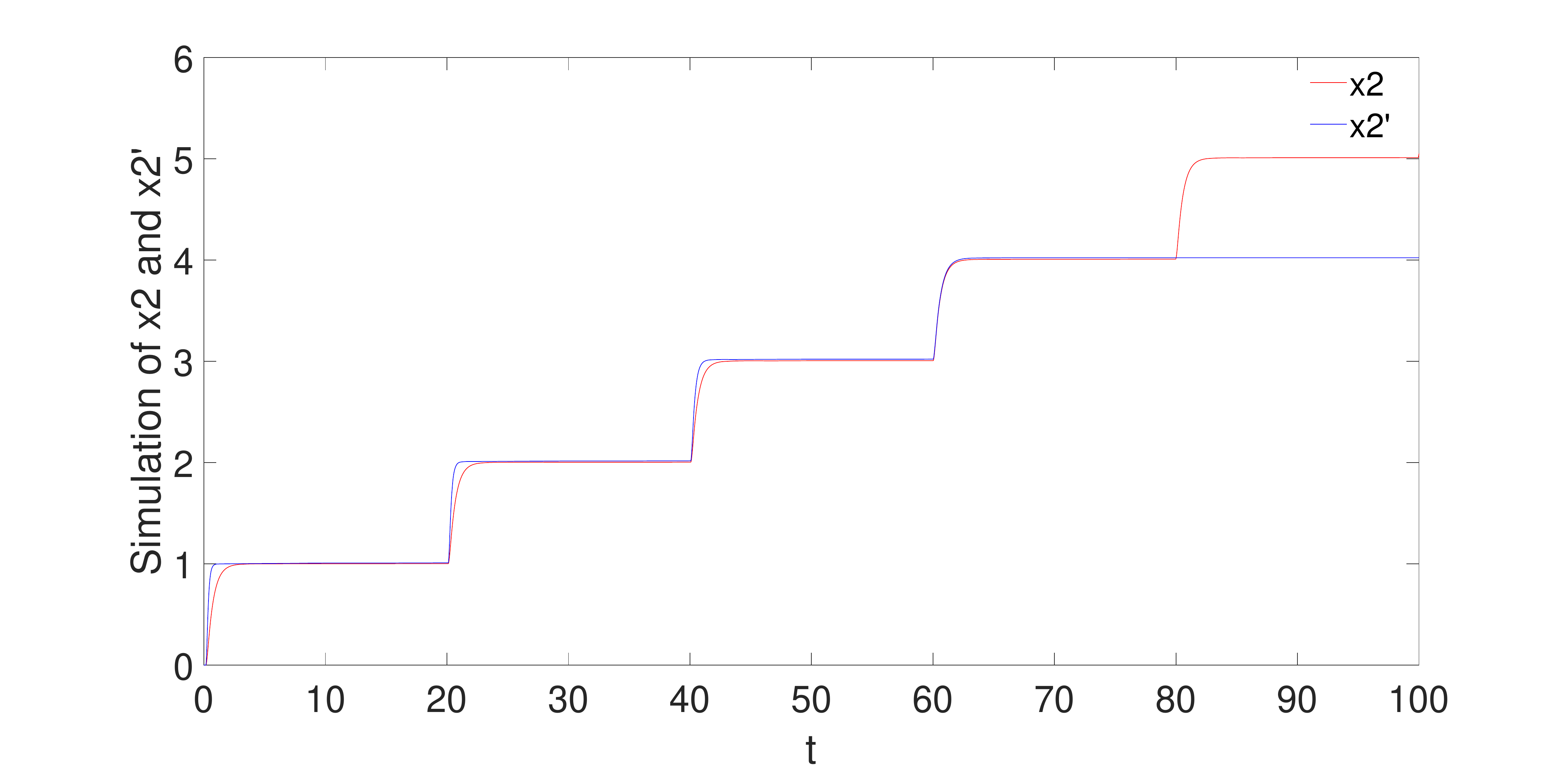}
  \caption{Comparison of $x_{1}'$ with $x_{1}$, $x_{2}'$ with $x_{2}$ under termination component $x_{3}$. }
\end{figure}
\par We also offer Example 5.2 utilizing $x_{4}$ as termination component and give corresponding simulation result under the same selection of parameters in Fig.11.
\begin{example}
  \begin{equation}
    \begin{aligned}
    \frac{\mathrm{d} x}{\mathrm{d} t} &= \eta_{1}\eta_{2}(-x^3+6x^2-9x+5-y)x/\epsilon\ , \\
    \frac{\mathrm{d} y}{\mathrm{d} t} &= \eta_{1}\eta_{2}(x-\rho)y\ , \\
    \frac{\mathrm{d} u}{\mathrm{d} t} &= \eta_{2}(p-u-cuv)\ , \\
    \frac{\mathrm{d} v}{\mathrm{d} t} &= \eta_{2}(x-v-cuv)\ , \\
    \frac{\mathrm{d} x_{1}}{\mathrm{d} t} &= \eta_{3}(x_{2}-x_{1})u\ , \\
    \frac{\mathrm{d} x_{2}}{\mathrm{d} t} &= \eta_{3}(x_{1}+1-x_{2})v\ , \\
    \frac{\mathrm{d} x_{4}}{\mathrm{d} t} &= \eta_{4}(n-x_{2}-x_{4})x_{4}\ , \\
    \frac{\mathrm{d} x_{1}'}{\mathrm{d} t} &= \eta_{5}(x_{2}'-x_{1}')ux_{4}\ , \\
    \frac{\mathrm{d} x_{2}'}{\mathrm{d} t} &= \eta_{5}(x_{1}'+1-x_{2}')vx_{4}\ . \\
    \end{aligned}
\end{equation}
\end{example}
\begin{figure}
  \centering
  \includegraphics[width=\columnwidth]{./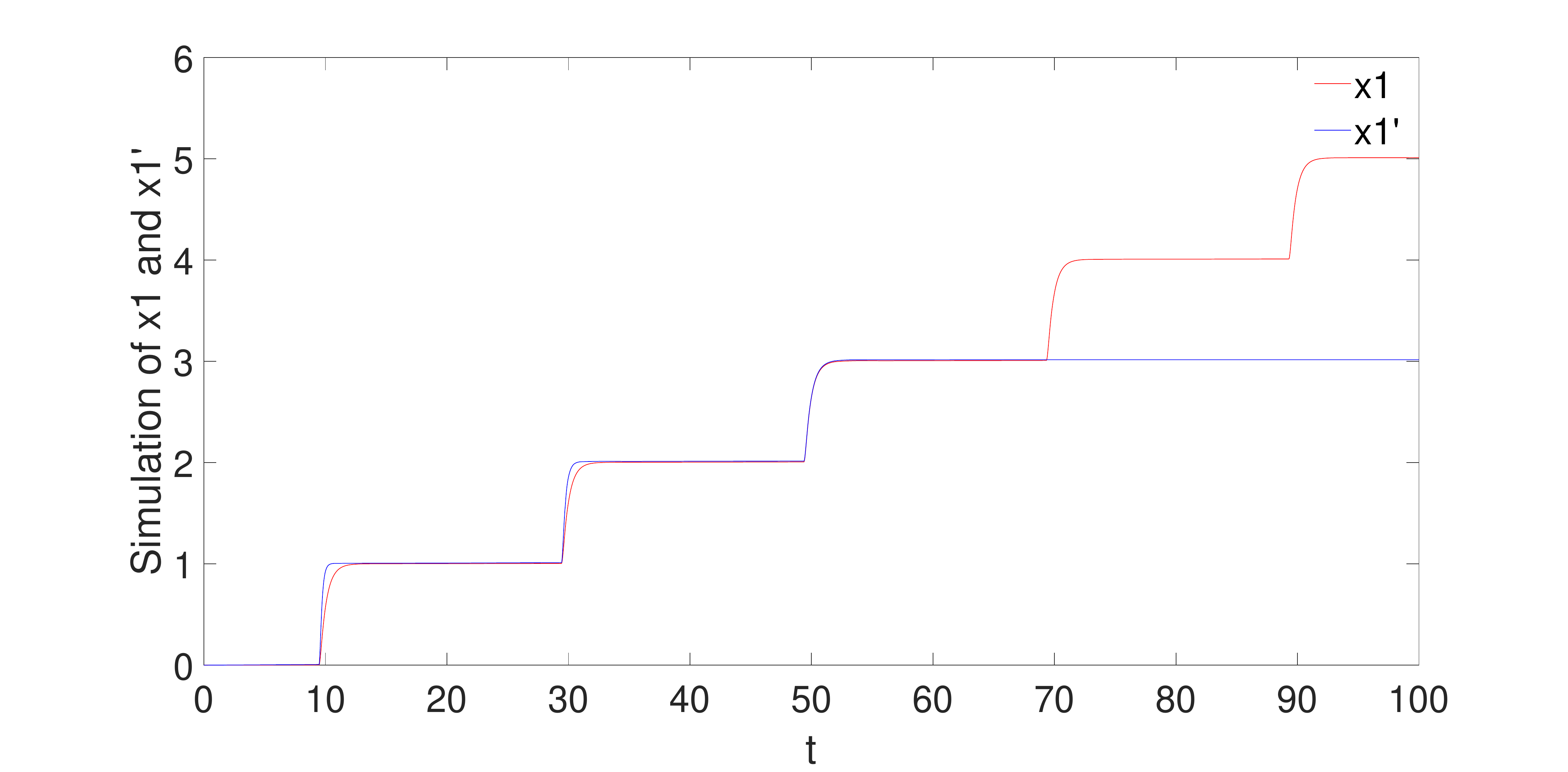}
  \hspace{2in} 
  \includegraphics[width=\columnwidth]{./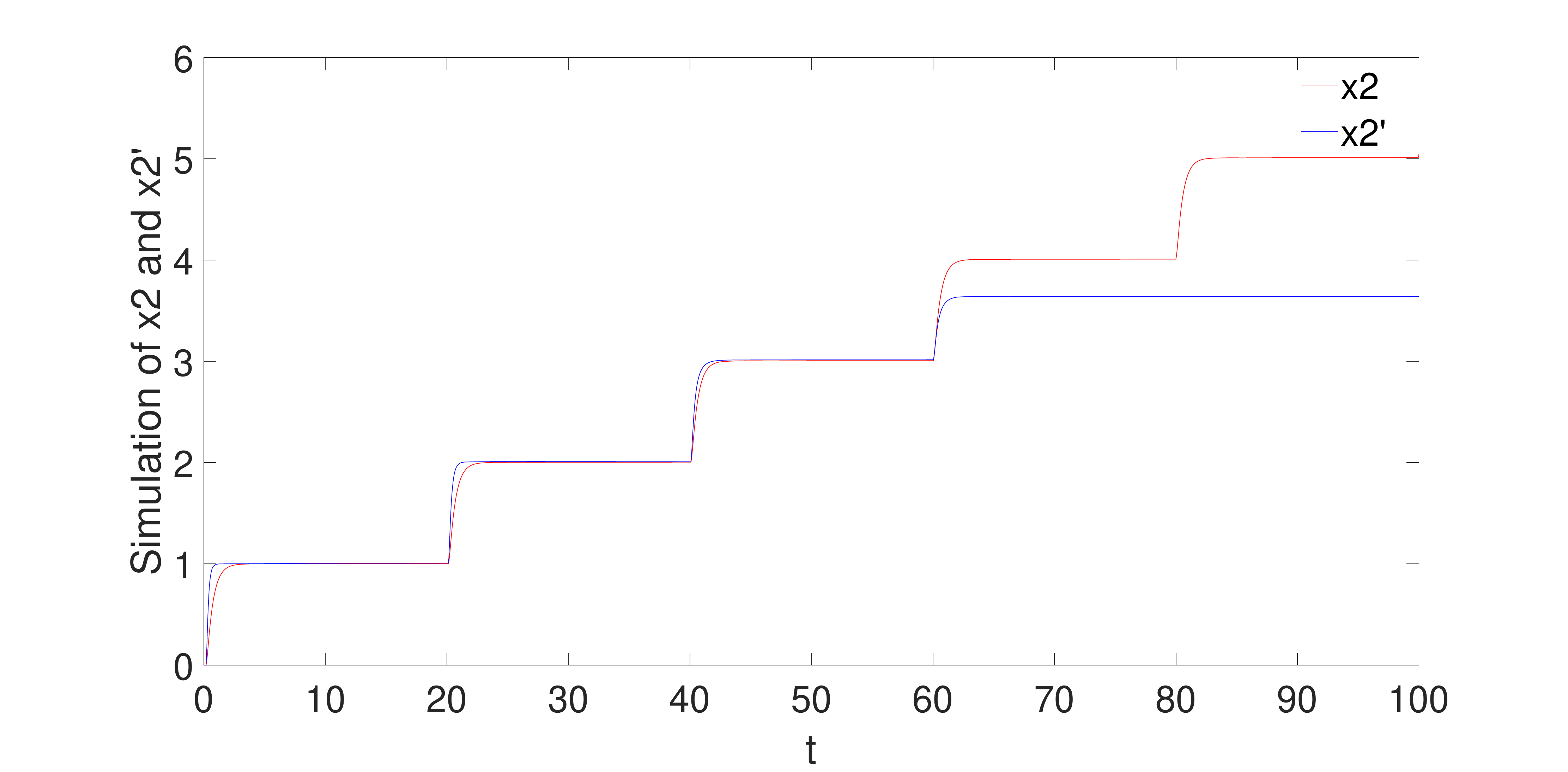}
  \caption{Comparison of $x_{1}'$ with $x_{1}$, $x_{2}'$ with $x_{2}$ under termination component $x_{4}$. }
\end{figure}
\par Different choices of termination component contribute to different simulation results of loop termination. Specifically, when using truncated subtraction of preset $n$ and $x_{1}$ i.e. $x_{3}$ as termination component, change of $x_{3}$ happens after $x_{1}$ increases by one, which is corresponding to the beginning of second half in one loop. So with preset loop times $n=4$, after the instruction $x_{1}'+=1$ operates for three times, $x_{1}'=x_{2}'=3$(See the blue lines in Fig.10), $x_{1}=x_{2}=3$(see the red lines in Fig.10) and $x_{3}=1$. Then the fourth loop happens, during the first half, $x_{2}$ and $x_{2}'$ increase to 4, while $x_{1}$ and $x_{1}'$ remain unchanged. At the beginning of the second half, $x_{1}$ converges to 4 at exponential speed, $x_{3}$ responds immediately to this change(Response speed is controlled by $\eta_{4}$) and converges to zero, which sets the rate of change in $x_{1}'$ and $x_{2}'$ to zero and terminates the loop of Timer Model based on $x_{1}'$ and $x_{2}'$. So in Fig.10, when the loop is ended, $x_{2}'$ stays at 4 while $x_{1}'$ is between 3 and 4 for that during $x_{2}$ converges to 4 and causes change in $x_{3}$, $x_{2}'$ is also increasing and the increase of $x_{2}'$ is aborted because of annihilation of $x_{3}$. In summary, termination of the loop controlled by $x_{3}$ occurs after $x_{2}$ increases to the preset $n=4$, when the fourth loop enters the second half. Mechanism in Example 5.1 terminates loop at the beginning of second half of the $n$th loop.
\par While in Example 5.2, we choose $x_{4}$ as termination component. When the third loop ends, $x_{1}=x_{2}=x_{1}'=x_{2}'=3$ and $x_{4}=1$. Next time the fourth loop begins, $x_{2}$ converges to 4 at exponential speed and quickly $x_{4}$ responds to zero, the loop of system $x_{1}'-x_{2}'$ is turned off. So mechanism in this example terminates loop at the beginning of $n$th loop, which can be seen in Fig.11.

\par Note that neither $x_{3}$ nor $x_{4}$ can exactly terminate the loop just as the last loop ends. This default can not be overcome under our mechanism for that both convergence and response take time, and the time at which our termination component ends the loop always lags behind our ideal time. Although we can weaken this lag by adjusting parameters, we emphasize that termination of loop with such precision is sometimes unnecessary in practice. In our Counter Model, if we just choose $x_{1}$ as the role of counter, every time $x_{1}$ increases when the loop enters the second half, so ending the loop at the beginning of a new one is enough. As Fig.11 shows, if we just want the loop of system $x_{1}'-x_{2}'$ to operate for $n$ times and output the termination value of $x_{1}'$, we just need to use truncated subtraction $n+1-x_{2}$ as termination component and add it into system $x_{1}'-x_{2}'$ as Example 5.2 demonstrates. Similar cases include judging first and then performing the corresponding operation, such as module comparing output with threshold and module performing weight learning in supervised neural network based on chemical reactions \cite{arredondo2022supervised}, and case where the main computation is concentrated in the second half of the loop. For these situations, our strategy of loop termination as Example 5.2 is effective. While if the main computation is concentrated in the first half of the loop, consider the corresponding strategy in Example 5.1.
\par We transform the ODEs in the Example 5.2 into the corresponding chemical reaction network using mass-action kinetics, and conclude the section with this.
\begin{align*}
    4X&\overset{\eta_{1}\eta_{2}/\epsilon }{\rightarrow}3X\ , \\
3X&\overset{6\eta_{1}\eta_{2}/\epsilon }{\rightarrow}4X\ , \\
2X&\overset{9\eta_{1}\eta_{2}/\epsilon }{\rightarrow}X\ , \\
X&\overset{5\eta_{1}\eta_{2}/\epsilon }{\rightarrow}2X\ , \\
X+Y&\overset{1\eta_{1}\eta_{2}/\epsilon }{\rightarrow}Y\ , \\
X+Y&\overset{\eta_{1}\eta_{2}}{\rightarrow}2Y\ , \\
2Y&\overset{\eta_{1}\eta_{2}\rho}{\rightarrow}Y\ , \\
\varnothing &\overset{\eta_{2}p}{\rightarrow}U\ , \\
U &\overset{\eta_{2}}{\rightarrow}\varnothing\ , \\
U+V &\overset{\eta_{2}c}{\rightarrow}V\ , \\
X &\overset{\eta_{2}}{\rightarrow}V\ , \\
V&\overset{\eta_{2}}{\rightarrow}\varnothing\ , \\
U+V &\overset{\eta_{2}c}{\rightarrow}U\ , \\
X_{2}+U &\overset{\eta_{3}}{\rightarrow}X_{1}+X_{2}+U\ , \\
X_{1}+U &\overset{\eta_{3}}{\rightarrow}U\ , \\
X_{1}+V &\overset{\eta_{3}}{\rightarrow}X_{1}+X_{2}+V\ , \\
V &\overset{\eta_{3}}{\rightarrow}X_{2}+V\ , \\
X_{2}+V &\overset{\eta_{3}}{\rightarrow}V\ , \\
N+X_{4} &\overset{\eta_{4}}{\rightarrow}N+2X_{4}\ , \\
X_{2}+X_{4} &\overset{\eta_{4}}{\rightarrow}X_{2}\ , \\
2X_{4} &\overset{\eta_{4}}{\rightarrow}X_{4}\ , \\
X_{2}'+U+X_{4} &\overset{\eta_{5}}{\rightarrow}X_{1}'+X_{2}'+U+X_{4}\ , \\
X_{1}'+U+X_{4} &\overset{\eta_{5}}{\rightarrow}U+X_{4}\ , \\
X_{1}'+V+X_{4} &\overset{\eta_{5}}{\rightarrow}X_{1}'+X_{2}'+V+X_{4}\ , \\
V+X_{4} &\overset{\eta_{5}}{\rightarrow}X_{2}'+V+X_{4}\ , \\
X_{2}'+V+X_{4} &\overset{\eta_{5}}{\rightarrow}V+X_{4}\ . 
\end{align*}

\section{General Process of Placing Oscillator Components into Reaction Modules}
Our primary goal in designing chemical oscillator is to achieve efficient molecular computation. Recent attempts to build artificial neural networks in biochemical environments \cite{moorman2019dynamical,vasic2021programming,chiang2015reconfigurable,blount2017feedforward,arredondo2022supervised,anderson2021reaction} have not only improved the computational power of molecular computers \cite{benenson2004autonomous}, but also helped advance the understanding of how living cells perform complex operations. 
\par There have been many ways to build supervised chemical neural network such as multilayer perceptron model and recurrent neural network, with difference lying in the selection of kinetics and chemical reaction network to realize each step of operation instruction. Most chemical neural networks need to adjust the operation sequence of modules in the process of feed-forward value transmission(For example, the reaction module of the later layer needs to wait for the previous layer to complete the operation before performing the corresponding operation), which can theoretically be solved by setting up multiple sets of oscillators. However, work of Vasic et al. \cite{vasic2021programming} on non-competitive CRNs showed that when selecting a specific chemical reaction network structure, the execution order between different modules in the process of feed-forward value transmission does not affect the results of the output layer. In other words, problem about module execution order, which can be avoided by selection of chemical reactions, is not worth the trouble of designing oscillators. While the problem of setting operation sequence for the feed-forward value transmission module and the weight learning module using back propagation algorithm cannot be avoided, because reactions in these two modules tend to share the same species as reactants, which violates the prerequisite of non-competitive CRNs \cite{vasic2021programming}. In Fig.12, we abstract these two reaction modules separately as feed-forward module and back propagation module, and demonstrate the adjustment of oscillator components to the corresponding reaction module.
 \begin{figure}[!t]
 	\centerline{\includegraphics[width=\columnwidth]{./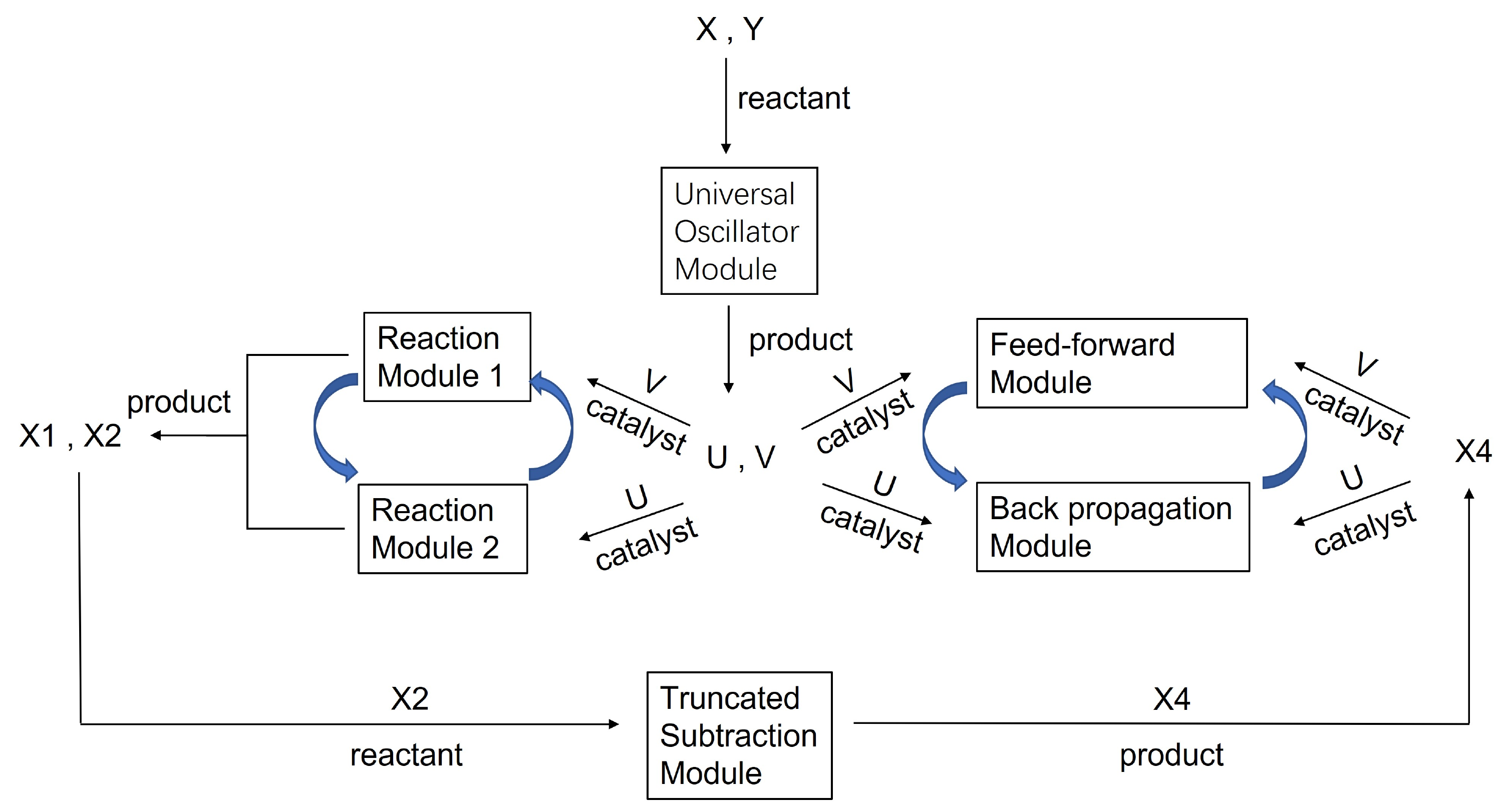}}
 	\caption{Flow chart of placing oscillator components into reaction modules}.
 	\label{fig12}
 \end{figure}
\par We use a similar approach to Example 5.2 to achieve the termination species $X_{4}$. As we emphasize in section \uppercase\expandafter{\romannumeral5}, such a design would make the chemical neural network turn off at the beginning of the $n+1$th feed-forward value transmission process after a preset number $n$ loops between the feed-forward process and back propagation process. The feed-forward process does not change the weight values, so the lag of our model for loop termination is irrelevant for training supervised chemical neural network.
\par Note that only $X$ and $Y$ of the species involved in our oscillator model require strictly given non-zero initial concentration, and the initial concentration of $X$ and $Y$ determines the initial phase of $U$ and $V$. In Example 3.2, we select specific expression of relaxation oscillation, and let the corresponding initial point be $(x,y)=(1,1)$. This ensures that $V$ enters the non-zero phase before $U$(in Fig.5), so we put clock signal $V$ into the module that needs to be prioritized i.e. Feed-forward Module in Fig.12. We can choose the initial concentration of $X$ and $Y$ flexibly according to actual needs, which is also the advantage of our oscillator model. Meanwhile, we prefer initial concentration of $X_{4}$ equal to the preset number $n$ to avoid unnecessary error in loop termination.
\par Although we demonstrate the process of placing oscillator components into the modules with form of flow chart in Fig.12, all of the reactions involved are not constrained by artificial segregation. After we set the initial concentration for all of the species in the system, reaction modules operate in turn due to the concentration change of oscillator components, rather than human intervention. Such design helps simulate more autonomous molecular computation.

\section{Conclusion and Discussions}
In this paper we develop a systematic approach to realize synchronous sequential computation with abstract chemical reactions. Our ultimate goal is to embed complex calculations into biochemical environments, and after setting the initial values of species and reaction rates, the biochemical system could run automatically to complete the target calculation task. We set up a universal chemical oscillator structure to solve the problem of how to stagger the previously disorderly reactions to make these reactions happen in the order we want. Much of the previous work mentioning chemical oscillators followed the logic of usability, how the oscillation is generated, how it is controlled by the model parameters and how the setting of initial values affects the oscillation properties are not involved in these work \cite{arredondo2022supervised,vasic2020,jiang2011}. While theoretical analysis of the models and mechanisms that cause oscillations is improving day by day \cite{tyson1980target,krupa2001relaxation,fernandez2020symmetric}, it is feasible in theory to design transparent chemical oscillators according to actual needs. Inspired by this, we give a universal approach of designing chemical oscillators to control the sequence of two reaction modules.
\par Different from the harmonic oscillators used in previous work \cite{arredondo2022supervised,jiang2011}, we choose relaxation oscillation as underlying structure of our oscillator model for that mechanism of relaxation oscillation is clear and it is robust independent to initial points. While existence and property of harmonic oscillators are depend on the selection of initial point, which is not flexible in response to specific application requirements. Besides, harsh selection of initial points often causes difficulties for biochemical implementation. In our design, parameters and structure of the chemical oscillator can be dynamically adjusted according to the needs of actual use, and to some extent, it is our oscillator model that adapts to the actual needs, rather than the other way around.
\par We explore the steps of building oscillator model and generating pair of symmetric clock signals, and give a simple example (Example 3.3) to fulfill our aim. As far as we know, to get a pair of symmetric clock signals, at the level of chemical reaction network theory, requires at least four species. In \cite{jiang2011}, the authors constructed a reaction network involved 12 species. In \cite{arredondo2022supervised}, to get two clock signals, dimension of the oscillator model is four. Therefore, our oscillator model is concise enough in terms of the number of species used. Selection of $f(x,y)$ in ODEs (4) is also flexible, taking it as a simple cubic function is enough for the rest of design. Tyson and Fife \cite{tyson1980target} abstracted another expression of $f(x,y)$($f(x,y)=x(1-x)-by(x-a)/(x+a)$, while $g(x,y)=x-y$, $a$ and $b$ are parameters) according to real chemical reactions. Substituting this set of structure, our oscillator model is still usable.
\par Although our analysis of the model mainly focus on the ODE level, we still fully consider its correspondence with chemical reactions when building the ODE model. Only when we set up the model for triggering relaxation oscillation, we do not specifically select ODE structures directly related to chemical reactions because models abstracted from biochemical examples are often too complex for theoretical analysis. Designs of other steps are derived directly from abstract chemical reactions, based on the principle that deterministic chemical kinetics is Turing universal \cite{fages2017strong}. We turn a whole example (Example 5.2) into abstract chemical reaction network at the end of section \uppercase\expandafter{\romannumeral5}, embedding additional parameters we introduce in rate constants of the corresponding reactions. Our analysis ends with the ODE simulation and the corresponding abstract chemical reaction network, while the subsequent work such as transform these abstract reactions into chemistry, can be achieved by DNA strand displacement cascades, which is beyond the scope of this paper.
\par We tested the effect of our oscillator model under the Counter Model, which can basically achieve the purpose we want. While faced with more complicated task, such as designing reaction sequences for the modules of a complete biochemical feed-forward neural network, Our oscillator models actually function as hubs:  combining and splicing the reaction modules to achieve a complete operation. Although this task is implemented in the same way that we realize the simple instruction $x_{1}+=1$, implementation of a whole biochemical feed-forward neural network is much more difficult and consists of large number of parameters to be analyzed. In the future we will do further analysis on implementing such more complex calculations.
\par Note that selection of parameters could arise dynamic behaviour as we want, our model is not as accurate as it looks, yet. We choose parameter $c$ and $\eta_{4}$ as large as possible in order to turn the low amplitude of $u$ and $v$ close enough to zero and accelerate the convergence speed. Limited by what we know about oscillations, we can only do so much. Other work such as \cite{arredondo2022supervised} is also a similar process to make the result look perfect. We think it's more reasonable to apply those constraints to reaction rate constants than to pick harshly selected initial values of species. 
\par Different from previous perspectives, we believe that how to make the system spontaneously terminate loops controlled by clock signals is also an important topic, and we give a feasible method to tackle with this that works in some situations. While as we emphasize in section \uppercase\expandafter{\romannumeral5}, our idea of ending loops is not universal for that our design does not allow the whole system to spontaneously turn off the entire loop at the end of the $n$th loop. This is because the termination component itself needs to respond to the new loop and then close it, and the resulting lag cannot be overcome by the model itself. We've tried other approaches, such as setting a module that performs the Sigmoid function to dynamically set the termination component to zero or one, which still fail to overcome the lag and increase the complexity of model. How to make our loop termination strategy more efficient is also a problem for future.
\par Our work provides theoretical analysis and assurance for embedding efficient algorithms in fields such as machine learning into biochemical environments, and oscillation plays an indispensable role in it. Different from modeling and analyzing the oscillations observed in biochemical experiments, it is also an attractive research content to design models to achieve the desired functions based on the understanding of oscillation. Recently, there has also been some work to build machine learning algorithms using oscillations. In \cite{rusch2020coupled,rusch2022graph}, the authors designed new structure of recurrent neural network and graph neural network based on coupled oscillators. how oscillations and the knowledge within the field of dynamical systems associated with oscillations, can serve other fields such as molecular computing and machine learning, will also be the focus of our future research.


\bibliographystyle{model1-num-names}

\bibliography{cas-refs}

\begin{thebibliography}{32}
\expandafter\ifx\csname natexlab\endcsname\relax\def\natexlab#1{#1}\fi
\providecommand{\url}[1]{\texttt{#1}}
\providecommand{\href}[2]{#2}
\providecommand{\path}[1]{#1}
\providecommand{\DOIprefix}{doi:}
\providecommand{\ArXivprefix}{arXiv:}
\providecommand{\URLprefix}{URL: }
\providecommand{\Pubmedprefix}{pmid:}
\providecommand{\doi}[1]{\href{http://dx.doi.org/#1}{\path{#1}}}
\providecommand{\Pubmed}[1]{\href{pmid:#1}{\path{#1}}}
\providecommand{\bibinfo}[2]{#2}
\ifx\xfnm\relax \def\xfnm[#1]{\unskip,\space#1}\fi
\bibitem[{Vasi{\'c} et~al.(2020)Vasi{\'c}, Soloveichik, and
  Khurshid}]{vasic2020}
\bibinfo{author}{M.~Vasi{\'c}}, \bibinfo{author}{D.~Soloveichik},
  \bibinfo{author}{S.~Khurshid},
\newblock \bibinfo{title}{Crn++: Molecular programming language},
\newblock \bibinfo{journal}{Natural Computing} \bibinfo{volume}{19}
  (\bibinfo{year}{2020}) \bibinfo{pages}{391--407}.
\bibitem[{Moorman et~al.(2019)Moorman, Samaniego, Maley, and
  Weiss}]{moorman2019dynamical}
\bibinfo{author}{A.~Moorman}, \bibinfo{author}{C.~C. Samaniego},
  \bibinfo{author}{C.~Maley}, \bibinfo{author}{R.~Weiss},
\newblock \bibinfo{title}{A dynamical biomolecular neural network},
\newblock in: \bibinfo{booktitle}{2019 IEEE 58th Conference on Decision and
  Control (CDC)}, \bibinfo{organization}{IEEE}, \bibinfo{year}{2019}, pp.
  \bibinfo{pages}{1797--1802}.
\bibitem[{Vasic et~al.(2021)Vasic, Chalk, Luchsinger, Khurshid, and
  Soloveichik}]{vasic2021programming}
\bibinfo{author}{M.~Vasic}, \bibinfo{author}{C.~Chalk},
  \bibinfo{author}{A.~Luchsinger}, \bibinfo{author}{S.~Khurshid},
  \bibinfo{author}{D.~Soloveichik},
\newblock \bibinfo{title}{Programming and training rate-independent chemical
  reaction networks},
\newblock \bibinfo{journal}{arXiv preprint arXiv:2109.11422}
  (\bibinfo{year}{2021}).
\bibitem[{Chiang et~al.(2015)Chiang, Jiang, and
  Fages}]{chiang2015reconfigurable}
\bibinfo{author}{H.-J.~K. Chiang}, \bibinfo{author}{J.-H.~R. Jiang},
  \bibinfo{author}{F.~Fages},
\newblock \bibinfo{title}{Reconfigurable neuromorphic computation in
  biochemical systems},
\newblock in: \bibinfo{booktitle}{2015 37th Annual International Conference of
  the IEEE Engineering in Medicine and Biology Society (EMBC)},
  \bibinfo{organization}{IEEE}, \bibinfo{year}{2015}, pp.
  \bibinfo{pages}{937--940}.
\bibitem[{Blount et~al.(2017)Blount, Banda, Teuscher, and
  Stefanovic}]{blount2017feedforward}
\bibinfo{author}{D.~Blount}, \bibinfo{author}{P.~Banda},
  \bibinfo{author}{C.~Teuscher}, \bibinfo{author}{D.~Stefanovic},
\newblock \bibinfo{title}{Feedforward chemical neural network: An in silico
  chemical system that learns xor},
\newblock \bibinfo{journal}{Artificial life} \bibinfo{volume}{23}
  (\bibinfo{year}{2017}) \bibinfo{pages}{295--317}.
\bibitem[{Tyson(2013)}]{tyson2013}
\bibinfo{author}{J.~J. Tyson}, \bibinfo{title}{The belousov-zhabotinskii
  reaction}, volume~\bibinfo{volume}{10}, \bibinfo{publisher}{Springer Science
  \& Business Media}, \bibinfo{year}{2013}.
\bibitem[{Forger(2017)}]{forger2017biological}
\bibinfo{author}{D.~B. Forger},
\newblock \bibinfo{title}{Biological clocks, rhythms, and oscillations: the
  theory of biological timekeeping}  (\bibinfo{year}{2017}).
\bibitem[{Arredondo and Lakin(2022)}]{arredondo2022supervised}
\bibinfo{author}{D.~Arredondo}, \bibinfo{author}{M.~R. Lakin},
\newblock \bibinfo{title}{Supervised learning in a multilayer, nonlinear
  chemical neural network},
\newblock \bibinfo{journal}{IEEE Transactions on Neural Networks and Learning
  Systems}  (\bibinfo{year}{2022}).
\bibitem[{Lachmann and Sella(1995)}]{lachmann1995computationally}
\bibinfo{author}{M.~Lachmann}, \bibinfo{author}{G.~Sella},
\newblock \bibinfo{title}{The computationally complete ant colony: Global
  coordination in a system with no hierarchy},
\newblock in: \bibinfo{booktitle}{European Conference on Artificial Life},
  \bibinfo{organization}{Springer}, \bibinfo{year}{1995}, pp.
  \bibinfo{pages}{784--800}.
\bibitem[{Jiang et~al.(2011)Jiang, Riedel, and Parhi}]{jiang2011}
\bibinfo{author}{H.~Jiang}, \bibinfo{author}{M.~Riedel},
  \bibinfo{author}{K.~Parhi},
\newblock \bibinfo{title}{Synchronous sequential computation with molecular
  reactions},
\newblock in: \bibinfo{booktitle}{Proceedings of the 48th Design Automation
  Conference}, \bibinfo{year}{2011}, pp. \bibinfo{pages}{836--841}.
\bibitem[{Feinberg(2019)}]{feinberg2019foundations}
\bibinfo{author}{M.~Feinberg},
\newblock \bibinfo{title}{Foundations of chemical reaction network theory}
  (\bibinfo{year}{2019}).
\bibitem[{Soloveichik et~al.(2010)Soloveichik, Seelig, and
  Winfree}]{soloveichik2010dna}
\bibinfo{author}{D.~Soloveichik}, \bibinfo{author}{G.~Seelig},
  \bibinfo{author}{E.~Winfree},
\newblock \bibinfo{title}{Dna as a universal substrate for chemical kinetics},
\newblock \bibinfo{journal}{Proceedings of the National Academy of Sciences}
  \bibinfo{volume}{107} (\bibinfo{year}{2010}) \bibinfo{pages}{5393--5398}.
\bibitem[{Anderson et~al.(2021)Anderson, Joshi, and
  Deshpande}]{anderson2021reaction}
\bibinfo{author}{D.~F. Anderson}, \bibinfo{author}{B.~Joshi},
  \bibinfo{author}{A.~Deshpande},
\newblock \bibinfo{title}{On reaction network implementations of neural
  networks},
\newblock \bibinfo{journal}{Journal of the Royal Society Interface}
  \bibinfo{volume}{18} (\bibinfo{year}{2021}) \bibinfo{pages}{20210031}.
\bibitem[{Buisman et~al.(2009)Buisman, ten Eikelder, Hilbers, and
  Liekens}]{buisman2009computing}
\bibinfo{author}{H.~Buisman}, \bibinfo{author}{H.~M. ten Eikelder},
  \bibinfo{author}{P.~A. Hilbers}, \bibinfo{author}{A.~M. Liekens},
\newblock \bibinfo{title}{Computing algebraic functions with biochemical
  reaction networks},
\newblock \bibinfo{journal}{Artificial life} \bibinfo{volume}{15}
  (\bibinfo{year}{2009}) \bibinfo{pages}{5--19}.
\bibitem[{Fages et~al.(2017)Fages, Guludec, Bournez, and
  Pouly}]{fages2017strong}
\bibinfo{author}{F.~Fages}, \bibinfo{author}{G.~L. Guludec},
  \bibinfo{author}{O.~Bournez}, \bibinfo{author}{A.~Pouly},
\newblock \bibinfo{title}{Strong turing completeness of continuous chemical
  reaction networks and compilation of mixed analog-digital programs},
\newblock in: \bibinfo{booktitle}{International conference on computational
  methods in systems biology}, \bibinfo{organization}{Springer},
  \bibinfo{year}{2017}, pp. \bibinfo{pages}{108--127}.
\bibitem[{Chalk et~al.(2019)Chalk, Kornerup, Reeves, and
  Soloveichik}]{chalk2019composable}
\bibinfo{author}{C.~Chalk}, \bibinfo{author}{N.~Kornerup},
  \bibinfo{author}{W.~Reeves}, \bibinfo{author}{D.~Soloveichik},
\newblock \bibinfo{title}{Composable rate-independent computation in continuous
  chemical reaction networks},
\newblock \bibinfo{journal}{IEEE/ACM Transactions on Computational Biology and
  Bioinformatics} \bibinfo{volume}{18} (\bibinfo{year}{2019})
  \bibinfo{pages}{250--260}.
\bibitem[{Gonze and Ruoff(2021)}]{gonze2021goodwin}
\bibinfo{author}{D.~Gonze}, \bibinfo{author}{P.~Ruoff},
\newblock \bibinfo{title}{The goodwin oscillator and its legacy},
\newblock \bibinfo{journal}{Acta Biotheoretica} \bibinfo{volume}{69}
  (\bibinfo{year}{2021}) \bibinfo{pages}{857--874}.
\bibitem[{Banaji(2018)}]{banaji2018inheritance}
\bibinfo{author}{M.~Banaji},
\newblock \bibinfo{title}{Inheritance of oscillation in chemical reaction
  networks},
\newblock \bibinfo{journal}{Applied Mathematics and Computation}
  \bibinfo{volume}{325} (\bibinfo{year}{2018}) \bibinfo{pages}{191--209}.
\bibitem[{Conradi et~al.(2019)Conradi, Mincheva, and Shiu}]{conradi2019}
\bibinfo{author}{C.~Conradi}, \bibinfo{author}{M.~Mincheva},
  \bibinfo{author}{A.~Shiu},
\newblock \bibinfo{title}{Emergence of oscillations in a mixed-mechanism
  phosphorylation system},
\newblock \bibinfo{journal}{Bulletin of mathematical biology}
  \bibinfo{volume}{81} (\bibinfo{year}{2019}) \bibinfo{pages}{1829--1852}.
\bibitem[{Epstein and Pojman(1998)}]{epstein1998introduction}
\bibinfo{author}{I.~R. Epstein}, \bibinfo{author}{J.~A. Pojman},
  \bibinfo{title}{An introduction to nonlinear chemical dynamics: oscillations,
  waves, patterns, and chaos}, \bibinfo{publisher}{Oxford university press},
  \bibinfo{year}{1998}.
\bibitem[{Krupa et~al.(2013)Krupa, Vidal, and Cl{\'e}ment}]{krupa2013network}
\bibinfo{author}{M.~Krupa}, \bibinfo{author}{A.~Vidal},
  \bibinfo{author}{F.~Cl{\'e}ment},
\newblock \bibinfo{title}{A network model of the periodic synchronization
  process in the dynamics of calcium concentration in gnrh neurons},
\newblock \bibinfo{journal}{The Journal of Mathematical Neuroscience}
  \bibinfo{volume}{3} (\bibinfo{year}{2013}) \bibinfo{pages}{1--24}.
\bibitem[{Field and Noyes(1974)}]{field1974oscillations}
\bibinfo{author}{R.~J. Field}, \bibinfo{author}{R.~M. Noyes},
\newblock \bibinfo{title}{Oscillations in chemical systems. iv. limit cycle
  behavior in a model of a real chemical reaction},
\newblock \bibinfo{journal}{The Journal of Chemical Physics}
  \bibinfo{volume}{60} (\bibinfo{year}{1974}) \bibinfo{pages}{1877--1884}.
\bibitem[{Fern{\'a}ndez-Garc{\'\i}a and Vidal(2020)}]{fernandez2020symmetric}
\bibinfo{author}{S.~Fern{\'a}ndez-Garc{\'\i}a}, \bibinfo{author}{A.~Vidal},
\newblock \bibinfo{title}{Symmetric coupling of multiple timescale systems with
  mixed-mode oscillations and synchronization},
\newblock \bibinfo{journal}{Physica D: Nonlinear Phenomena}
  \bibinfo{volume}{401} (\bibinfo{year}{2020}) \bibinfo{pages}{132129}.
\bibitem[{Krupa and Szmolyan(2001)}]{krupa2001relaxation}
\bibinfo{author}{M.~Krupa}, \bibinfo{author}{P.~Szmolyan},
\newblock \bibinfo{title}{Relaxation oscillation and canard explosion},
\newblock \bibinfo{journal}{Journal of Differential Equations}
  \bibinfo{volume}{174} (\bibinfo{year}{2001}) \bibinfo{pages}{312--368}.
\bibitem[{Grasman(2012)}]{grasman2012asymptotic}
\bibinfo{author}{J.~Grasman}, \bibinfo{title}{Asymptotic methods for relaxation
  oscillations and applications}, volume~\bibinfo{volume}{63},
  \bibinfo{publisher}{Springer Science \& Business Media},
  \bibinfo{year}{2012}.
\bibitem[{Fenichel(1979)}]{fenichel1979geometric}
\bibinfo{author}{N.~Fenichel},
\newblock \bibinfo{title}{Geometric singular perturbation theory for ordinary
  differential equations},
\newblock \bibinfo{journal}{Journal of differential equations}
  \bibinfo{volume}{31} (\bibinfo{year}{1979}) \bibinfo{pages}{53--98}.
\bibitem[{Hangos and Szederk{\'e}nyi(2011)}]{hangos2011mass}
\bibinfo{author}{K.~Hangos}, \bibinfo{author}{G.~Szederk{\'e}nyi},
\newblock \bibinfo{title}{Mass action realizations of reaction kinetic system
  models on various time scales},
\newblock in: \bibinfo{booktitle}{Journal of Physics: Conference Series},
  volume \bibinfo{volume}{268}, \bibinfo{organization}{IOP Publishing},
  \bibinfo{year}{2011}, p. \bibinfo{pages}{012009}.
\bibitem[{Tyson and Fife(1980)}]{tyson1980target}
\bibinfo{author}{J.~J. Tyson}, \bibinfo{author}{P.~C. Fife},
\newblock \bibinfo{title}{Target patterns in a realistic model of the
  belousov--zhabotinskii reaction},
\newblock \bibinfo{journal}{The Journal of Chemical Physics}
  \bibinfo{volume}{73} (\bibinfo{year}{1980}) \bibinfo{pages}{2224--2237}.
\bibitem[{Wechselberger(2005)}]{wechselberger2005existence}
\bibinfo{author}{M.~Wechselberger},
\newblock \bibinfo{title}{Existence and bifurcation of canards in
  \$$\backslash$mathbbr\^{}3\$ in the case of a folded node},
\newblock \bibinfo{journal}{SIAM Journal on Applied Dynamical Systems}
  \bibinfo{volume}{4} (\bibinfo{year}{2005}) \bibinfo{pages}{101--139}.
\bibitem[{Benenson et~al.(2004)Benenson, Gil, Ben-Dor, Adar, and
  Shapiro}]{benenson2004autonomous}
\bibinfo{author}{Y.~Benenson}, \bibinfo{author}{B.~Gil},
  \bibinfo{author}{U.~Ben-Dor}, \bibinfo{author}{R.~Adar},
  \bibinfo{author}{E.~Shapiro},
\newblock \bibinfo{title}{An autonomous molecular computer for logical control
  of gene expression},
\newblock \bibinfo{journal}{Nature} \bibinfo{volume}{429}
  (\bibinfo{year}{2004}) \bibinfo{pages}{423--429}.
\bibitem[{Rusch and Mishra(2020)}]{rusch2020coupled}
\bibinfo{author}{T.~K. Rusch}, \bibinfo{author}{S.~Mishra},
\newblock \bibinfo{title}{Coupled oscillatory recurrent neural network (cornn):
  An accurate and (gradient) stable architecture for learning long time
  dependencies},
\newblock \bibinfo{journal}{arXiv preprint arXiv:2010.00951}
  (\bibinfo{year}{2020}).
\bibitem[{Rusch et~al.(2022)Rusch, Chamberlain, Rowbottom, Mishra, and
  Bronstein}]{rusch2022graph}
\bibinfo{author}{T.~K. Rusch}, \bibinfo{author}{B.~P. Chamberlain},
  \bibinfo{author}{J.~Rowbottom}, \bibinfo{author}{S.~Mishra},
  \bibinfo{author}{M.~M. Bronstein},
\newblock \bibinfo{title}{Graph-coupled oscillator networks},
\newblock \bibinfo{journal}{arXiv preprint arXiv:2202.02296}
  (\bibinfo{year}{2022}).

\end{thebibliography}

\bio{}
\endbio

\end{document}